\renewcommand{\P}{\mathbb{P}}
\newcommand{\F}{\mathbb{F}}
\newcommand{\Z}{\mathbb{Z}}
\newcommand{\Q}{\mathbb{Q}}
\newcommand{\R}{\mathbb{R}}
\newcommand{\rhoELS}{\rho^{\mathrm{ELS}}}
\DeclareMathOperator{\Gal}{Gal}
\DeclareMathOperator{\Span}{span}
\DeclareMathOperator{\Ht}{ht}
\DeclareMathOperator{\Gr}{Gr}
\newtheorem{theorem}{Theorem}[section]
\newtheorem{conjecture}[theorem]{Conjecture}
\newtheorem{lemma}[theorem]{Lemma}
\newtheorem{proposition}[theorem]{Proposition}
\theoremstyle{definition}
\newtheorem{remark}[theorem]{Remark}
\newtheorem{definition}[theorem]{Definition}
\newtheorem{example}[theorem]{Example}
\numberwithin{equation}{section}
\numberwithin{table}{section}
\newcommand{\xival}[3][]{\xi^{(#1)}_{#2,#3}}
\newcommand{\fI}{f_{\mathrm{I}}}
\newcommand{\fII}{f_{\mathrm{II}}}
\newcommand{\fIII}{f_{\mathrm{III}}}
\def\blfootnote{\gdef\@thefnmark{}\@footnotetext}
\begin{document}

\title{How often does a cubic hypersurface have a rational point?}

\author[Lea Beneish]{Lea Beneish$^1$}
\address{$^1$Department of Mathematics, University of North Texas, Denton, TX, USA}
\email{lea.beneish@unt.edu}

\author[Christopher Keyes]{Christopher Keyes$^2$}
\address{$^2$Department of Mathematics, King's College London, London, UK\newline \indent \hspace{1ex} and Heilbronn Institute for Mathematical Research, Bristol, UK}
\email{christopher.keyes@kcl.ac.uk}

\begin{abstract}
    A cubic hypersurface in $\mathbb{P}^n$ defined over $\mathbb{Q}$ is given by the vanishing locus of a cubic form $f$ in $n+1$ variables. It is conjectured that when $n \geq 4$, such cubic hypersurfaces satisfy the Hasse principle. This is now known to hold on average due to recent work of Browning, Le Boudec, and Sawin. Using this result, we determine the proportion of cubic hypersurfaces in $\P^n$, ordered by the height of $f$, with a rational point for $n \geq 4$ explicitly as a product over primes $p$ of rational functions in $p$. In particular, this proportion is equal to 1 for cubic hypersurfaces in $\mathbb{P}^n$ for $n \geq 9$; for $100\%$ of cubic hypersurfaces, this recovers a celebrated result of Heath-Brown that non-singular cubic forms in at least 10 variables have rational zeros. In the $n=3$ case, we give a precise conjecture for the proportion of cubic surfaces in $\P^3$ with a rational point.
\end{abstract}

\maketitle

\section{Introduction}

\blfootnote{\textit{License}: for the purposes of open access, the authors have applied a Creative Commons Attribution (CC BY) license to any Author Accepted Manuscript version arising from this submission.}

A cubic hypersurface in $\P^n$ defined over $\Q$ is given by the vanishing locus of an integral cubic form $f$ in $n+1$ variables, 
\begin{equation}\label{eq:cubic_form}
    X_f \colon f(x_0, \ldots, x_n) = \sum\limits_{0\leq i \leq j \leq k \leq n} a_{ijk}x_ix_jx_k = 0.
\end{equation}
We are interested in the proportion of $X_f$ that possess a rational point. In studying the rational points of such hypersurfaces, it is often useful to study their local points. If a variety $X/\Q$ possesses $\Q_v$-points for all places $v$ of $\Q$, we say $X$ is \textit{everywhere locally soluble}. This property is a necessary but not sufficient condition for $X$ to possess rational points. If $X$ is everywhere locally soluble, but still fails to have a rational point, we say there is an obstruction to the \textit{Hasse principle} (i.e., the Hasse principle does not hold).

More generally, a degree $d$ hypersurface in $\P^n$ is \textit{Fano} if $d \leq n$. Over any number field, it is known that smooth Fano hypersurfaces of dimension at least $3$ do not have a Brauer--Manin obstruction to the Hasse principle \cite[Appx.\ A]{poonenvoloch}. It has further been conjectured by Colliot-Th\'{e}l\`ene \cite{Colliot} that this is the only possible obstruction to the Hasse principle, hence the Hasse principle should hold. Browning, Le Boudec, and Sawin \cite{BLeBS} prove that this is \textit{almost always true} over the rationals in the following precise sense: in the limit as $A$ tends to infinity, the proportion of Fano hypersurfaces defined over $\mathbb{Q}$ of fixed degree $d$ in $\P^n$ (except in the case $n=d=3$) with height at most $A$ is equal to the proportion of such hypersurfaces that are everywhere locally soluble. 

Explicitly, counting hypersurfaces by the Euclidean height of the vector $\boldsymbol{a} \in \Z^{\binom{n+d}{d}}$ of coefficients of the defining degree $d$ form $f \in \Z[x_0, \ldots, x_n]$,
    \[\Ht(f) := || \boldsymbol{a} ||_2 =  \left( \sum_{0 \leq i_1 \leq \ldots \leq i_d \leq n} a_{i_1\ldots i_d}^2\right)^{1/2},\]
we define the natural densities
\begin{align}
	\label{eq:def_rho} \rho_{d,n} &= \lim_{A \to \infty} \frac{\# \{f : \Ht(f) \leq A,\ X_f(\mathbb{Q}) \neq \emptyset \}}{\#\{f : \Ht(f) \leq A\}}\\
	\label{eq:def_rhoELS} \rhoELS_{d,n} &= \lim_{A \to \infty} \frac{\# \{f : \Ht(f) \leq A,\ X_f(\mathbb{Q}_v) \neq \emptyset \text{ for all } v \}}{\#\{f : \Ht(f) \leq A\}}
\end{align}
where $X_f$ denotes the degree $d$ hypersurface in $\mathbb{P}^n$ cut out by $f=0$ and $v$ runs through all places of $\mathbb{Q}$. For fixed $(d,n) \neq (3,3)$ satisfying $2 \leq d \leq n$, Browning, Le Boudec, and Sawin show that  $\rho_{d,n} = \rho_{d,n}^{\mathrm{ELS}}$ \cite[Theorem 1.1]{BLeBS}. They also deduce that for $(d,n) \neq (2,2)$ satisfying $2 \leq d \leq n$, we have $\rho_{d,n} > 0$ \cite[Corollary 1.2]{BLeBS}; that is, for a positive proportion of degree $d$ Fano hypersurfaces $X_f \subset \P^n$ --- now excluding plane conics ($d=n=2$) but \textit{including} cubic surfaces ($d=n=3$) --- we have $X_f(\Q) \neq \emptyset$. 

In this paper, for each $n \geq 4$, we determine the proportion $\rho_{3,n}$ explicitly as a product of local factors and compute it numerically to high precision. A striking feature is that the local factors are uniform in $p$, given by rational functions.
\begin{theorem}\label{thm:main_thm}
 Let $n \geq 4$. There exist polynomials $g_n(t), h_n(t) \in \Z[t]$ such that the proportion of cubic hypersurfaces in $\P^n$ ordered by Euclidean height that possess a rational point is given by 
        \[\rho_{3,n} = \begin{cases}
            \displaystyle \prod_{p \text{ prime}} \left(1 - \frac{g_n(p)}{h_n(p)}\right) & 4 \leq n \leq 8 \\
            1 & n  \geq 9.
        \end{cases}\]
\noindent  The polynomials $g_n, h_n$ are given explicitly in \eqref{eq:g4} -- \eqref{eq:h8}.
\end{theorem}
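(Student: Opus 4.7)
The plan is to invoke the Browning--Le Boudec--Sawin theorem cited in the introduction to replace $\rho_{3,n}$ with $\rhoELS_{3,n}$ for all $n \geq 4$, and then to factor the latter as a product of local densities
\[\rhoELS_{3,n} = \rho_{\infty} \cdot \prod_{p} \rho_p,\]
where $\rho_v$ denotes the $v$-adic density of integral cubic forms $f$ in $n+1$ variables such that $X_f(\Q_v) \neq \emptyset$. Writing this factorisation requires a standard equidistribution/Ekedahl-sieve argument: coefficients of $f$ equidistribute in $\Z_p^{\binom{n+3}{3}}$ as the height grows, and the tail estimates needed to exchange the limit with the product follow from the fact that $\rho_p = 1 - O(p^{-2})$. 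The real factor is easy: fixing all but one variable generically leaves a univariate cubic over $\R$, which always has a real root, so $\rho_\infty = 1$.

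For $n \geq 9$, the case reduces immediately to the theorem of Demyanov and Lewis, which guarantees that every cubic form over $\Q_p$ in at least $10$ variables has a nontrivial zero; hence $\rho_p = 1$ for every prime $p$, and the product collapses to $1$. All content of the theorem therefore lies in the range $4 \leq n \leq 8$, and the remaining task is to compute $\rho_p$ as an explicit rational function in $p$, uniform in $p$.

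For each such $n$ and each prime $p$, I would compute $\rho_p$ by a stratification of cubic forms $f \in \Z_p[x_0,\ldots,x_n]_3$ by the geometry of the reduction $\bar f \in \F_p[x_0,\ldots,x_n]$. If $\bar f$ cuts out a smooth cubic hypersurface in $\P^n_{\F_p}$, then $X_{\bar f}(\F_p) \neq \emptyset$ (for $n \geq 2$, by Weil-type bounds, handled uniformly in $p$ up to finitely many small primes), and Hensel's lemma lifts any smooth $\F_p$-point to a $\Q_p$-point. The $p$-adic measure of such \emph{good} forms is $1 - O(p^{-2})$, given by a rational function in $p$ whose numerator counts singular cubic hypersurfaces over $\F_p$ via the discriminant. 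For the remaining forms --- where $\bar f$ is singular or identically zero --- one stratifies by singularity type (ordinary nodes, higher $A_k$ singularities, higher-dimensional singular loci, and the ``$\bar f = 0$'' stratum which recurses after the substitution $f \mapsto p^{-1} f$ or an analogous linear-algebra change of coordinates). In every stratum one either resolves solubility by Hensel's lemma applied to a smooth point of a hyperplane section (available once the dimension of smooth locus is large enough), or recognises insolubility combinatorially, or recurses on a lower-dimensional problem whose density is tractable. Summing the geometric series arising from the recursive substitutions gives $\rho_p$ as a single rational function of $p$, and one then reads off the polynomials $g_n, h_n$.

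The main obstacle is the combinatorial bookkeeping in the last step: classifying singularity strata of cubic hypersurfaces in $\P^n_{\F_p}$ for $4 \leq n \leq 8$ and verifying that the recursion terminates in a uniform way across primes. The expected simplifying phenomenon is that, because we deal only with cubics (degree $3$) and the strata are defined by rank and singularity conditions on symmetric $3$-tensors, the list of relevant strata is finite and independent of $p$, with $\F_p$-point counts given by rational functions in $p$; a small number of exceptional primes $p \in \{2,3\}$ may require separate verification that the uniform formula still applies, likely via direct enumeration. Once the uniform rational function is established, convergence of the Euler product is automatic from the $O(p^{-2})$ estimate, and the numerical values can be computed to high precision by truncating the product at a suitable cutoff and bounding the tail.
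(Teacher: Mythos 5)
Your outer scaffolding matches the paper: invoke Browning--Le Boudec--Sawin to reduce $\rho_{3,n}$ to $\rhoELS_{3,n}$, factor as $\rho_\infty\prod_p\rho_p$ via Poonen--Voloch / Bright--Browning--Loughran, observe $\rho_\infty=1$, and for $n\geq9$ cite Dem'yanov--Lewis. All of this is correct and is what the paper does.

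However, your plan for computing the local densities $\rho_p$ has a genuine gap, and it is precisely where the paper's main idea lives. You propose to stratify by singularity type of $\overline{X_f}$, using the discriminant to measure the singular locus and then recursing through $A_k$ strata. This is the wrong stratification for the problem, for two reasons. First, what controls Hensel-liftability is the existence of a \emph{smooth $\F_p$-point}, not the singularity type: a badly singular cubic (e.g.\ a cone, or a union of a quadric and a hyperplane) can still have plenty of smooth $\F_p$-points. Classifying cubic hypersurfaces in $\P^n_{\F_p}$ for $n$ up to $8$ by singularity type is an enormous, essentially hopeless combinatorial task, and it would not directly tell you whether a smooth $\F_p$-point exists. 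Second, since your smoothness criterion invokes Weil-type bounds ``up to finitely many small primes,'' you would not obtain uniformity in $p$ without additional case analysis at $p=2,3$; one of the notable features of the actual result is that the local densities are given by a \emph{single} rational function valid for every prime.

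The missing idea is the paper's Proposition~\ref{prop:lifting_prob_1}: if $X_f(\Q_p)=\emptyset$ for a primitive cubic form $f$, then $\overline f$ must factor (over $\overline{\F_p}$) as a product of three Galois-conjugate linear forms over $\F_{p^3}$, that is, $\overline f$ has one of three ``factorization types'' (Definition~\ref{def:types}): a triple hyperplane, or a union of three conjugate hyperplanes whose pairwise intersections either coincide or form a genuine triangle. This is far coarser and more tractable than a singularity-type stratification: every other cubic form has a smooth $\F_p$-point on a reduced $\F_p$-rational component and lifts by Hensel, with no discriminant count needed and no Weil estimate beyond the plane-curve base case. The recursion is then driven not by ``smaller-dimensional strata'' but by substitutions $x_i\mapsto px_i$ on the distinguished block of variables followed by division by $p$, producing a closed system of linear relations among conditional lifting probabilities that can be solved symbolically, uniformly in $p$. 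Without identifying this characterization of insoluble reductions, the computation of $\rho_p$ as a rational function does not go through as you describe.
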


In the case of cubic surfaces ($d=n=3$), $\rho_{3,3} = \rhoELS_{3,3}$ has been conjectured but is not known to hold \cite[Conjecture 2.2(ii)]{poonenvoloch}. We can, however, give a similarly explicit description of $\rhoELS_{3,3}$, leading to a conjectural formula for $\rho_{3,3}$.

\begin{conjecture}[Cubic surfaces]
    Let $n=3$. The proportion of cubic surfaces in $\P^3$ ordered by Euclidean height that possess a rational point is given by\footnote{Note that the conjectured formula for $\frac{g_3(p)}{h_3(p)}$ is not presented in lowest form, to save space.}
    \[\rho_{3,3} = \prod_p \textstyle \left(1 - \frac{\left(3  p^{26} + p^{24} + p^{23} + 4  p^{22} - 3  p^{21} + 3  p^{20} + 2  p^{19} + 2  p^{18} - p^{17} + p^{14} + p^{13} - 2  p^{12} + 3  p^{11} + 3  p^{7}\right) \left(p^{2} + 1\right) \left(p + 1\right)^{2} \left(p - 1\right)^{4}}{9  \left(p^{13} - 1\right) \left(p^{7} + 1\right) \left(p^{7} - 1\right) \left(p^{6} + 1\right) \left(p^{5} - 1\right) \left(p^{3} + 1\right) \left(p^{3} - 1\right)}\right).\]
    Numerically, $\rho_{3,3} \approx 0.999927$.
\end{conjecture}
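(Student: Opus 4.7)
The conjecture has two components: the conditional equality $\rho_{3,3} = \rhoELS_{3,3}$, which is the special case \cite[Conjecture 2.2(ii)]{poonenvoloch} of the Poonen--Voloch conjecture and is assumed as an input, and the unconditional computation of $\rhoELS_{3,3}$ as the displayed Euler product. My plan is to focus on the latter, following the same overall strategy as the proof of Theorem \ref{thm:main_thm} for $n \geq 4$ but with a substantially more intricate singularity analysis.

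The first step is a standard Ekedahl-style sieve which yields
\[\rhoELS_{3,3} = \rho_\infty \cdot \prod_p \rho_p,\]
where $\rho_v$ denotes the density of cubic forms in the appropriate completion that are $\mathbb{Q}_v$-soluble. Every cubic form in four real variables has a nontrivial real zero, so $\rho_\infty = 1$, and the work therefore reduces to computing each $\rho_p$ as a single rational function in $p$.

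For each prime $p$ I would stratify the $20$-dimensional $\Z_p$-module of cubic forms by the isomorphism type of the reduction $\bar f$ over $\F_p$. When $\bar f$ cuts out a smooth cubic surface, classical point-counting on cubic surfaces over finite fields (the Weil bounds, supplemented by a finite check for small $p$) guarantees an $\F_p$-point which lifts by Hensel's lemma, so the entire smooth stratum contributes probability $1$. For singular, reducible, or non-reduced reductions, I would use the Bruce--Wall classification of cubic surface singularities (types $A_1, \ldots, A_5, D_4, E_6$, together with the non-normal and degenerate strata) to compute both the $p$-adic measure of each stratum and the conditional probability of $\mathbb{Q}_p$-solubility on it. Many strata still possess a smooth $\F_p$-point away from the singular locus and so contribute probability $1$; the remaining strata require an iterative procedure in which one rescales (or blows up) near the singular locus, lands in a lower-dimensional space of cubic forms, and re-enters the stratification one level deeper.

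The principal obstacle is showing that this recursion, once aggregated over all strata and all depths, collapses to a single rational function $g_3(p)/h_3(p)$ uniformly in $p$. This requires careful bookkeeping to rule out $p$-dependent corrections at small primes (where the smooth-surface $\F_p$-point check is tightest and where reducible reductions appear with atypical density) and a resummation of the resulting geometric series in closed form; this closed form is precisely what produces the somewhat unwieldy numerator of the conjectured formula. Once the Euler factor is in hand, truncating the product over small primes and estimating the tail using the shape of $g_3(p)/h_3(p)$ gives the stated numerical value $\rho_{3,3} \approx 0.999927$, and provides a sanity check against direct enumeration of cubic surfaces over $\F_p$ for small $p$.
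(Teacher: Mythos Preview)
Your high-level framework is correct: the conjecture splits into the conjectural input $\rho_{3,3}=\rhoELS_{3,3}$ and the unconditional computation of $\rhoELS_{3,3}$ as an Euler product, with $\rho_\infty=1$ and the task reducing to finding each $\rho_p$ as a rational function of $p$.

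Where you diverge from the paper is in the computation of $\rho_p$. You propose stratifying by the full Bruce--Wall singularity classification of cubic surfaces and handling each stratum separately. The paper instead proves a single structural lemma (Proposition~\ref{prop:lifting_prob_1}): if $X_f(\Q_p)=\emptyset$, then $\overline f$ must factor as a product of three Galois-conjugate hyperplanes over $\F_{p^3}$, i.e.\ $\overline f$ has one of the three \emph{factorization types} of Definition~\ref{def:types}. Thus every stratum in the Bruce--Wall list other than these three already contributes lifting probability $1$, and the recursion only ever sees types $i\in\{1,2,3\}$. This collapses the problem to a finite linear system (64 equations in the probabilities $\rho_n,\rho_n^{(j)},\sigma_{n,i},\sigma_{n,i}^{(k)},\sigma_{n,i}',\tau_{n,ij},\tau_{n,ij}',\theta_{n,ijk}$) whose coefficients are the explicit $\xi$'s of Lemmas~\ref{lem:xi_no_cond}--\ref{lem:xi_cond}, and solving it symbolically produces $g_3/h_3$ directly and uniformly in $p$.

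Your route is not obviously wrong, but it carries real risk. The Bruce--Wall list is a geometric classification over $\overline{\F_p}$; to use it here you would need the $\F_p$-rational refinement with Galois twists, and you would have to verify stratum by stratum that a smooth $\F_p$-point exists (including at $p=2,3$). More seriously, your recursion ``blow up near the singular locus and land in a lower-dimensional space of cubic forms'' is not quite what happens: after rescaling, one stays with a cubic form in the same $n+1$ variables but with valuation constraints, and it is not evident from your description that the process closes up into a \emph{finite} linear system rather than an infinite tree of cases. The paper's three-type reduction is exactly the device that guarantees this closure and yields the displayed rational function without any small-prime corrections.
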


\begin{remark}[asymptotics and numerics]
    We record below in Table \ref{tab:asymp_num} how quickly each local factor approaches 1, deduced from the explicit descriptions given in Section \ref{sec:explicit_functions}. We also record the approximate numerical values of $\rho_{3,n}$ (including the conjectural value of $\rho_{3,3}$). The details of these calculations and their precision are discussed in Section \ref{sec:numerics}.
    \begin{table}[h]
        \centering
        \caption{Asymptotics and numerics for $\rho_{3,n}$}
        \label{tab:asymp_num}        \renewcommand{\arraystretch}{1.25}
        \begin{tabular}{r c l}
            \hline $n$ & $\frac{g_n(t)}{h_n(t)} \sim$ & $\rho_{3,n} \approx$ \\ \hline
            3 & $1/3t^{10}$ & $0.999927$ (\textit{conj}.) \\
            4 & $1/9t^{22}$ & $1-5.022 \cdot 10^{-9}$ \\
            5 & $1/9t^{43}$ & $1-1.343 \cdot 10^{-15}$ \\
            6 & $1/9t^{78}$ & $1-3.502 \cdot 10^{-26}$ \\
            7 & $1/27t^{129}$ & $1-5.152 \cdot 10^{-42}$ \\
            8 & $1/27t^{201}$& $1-6.222 \cdot 10^{-64}$ \\ \hline
        \end{tabular}
    \end{table}
\end{remark}

For $n \geq 9$, Theorem \ref{thm:main_thm} recovers a consequence of Heath-Brown's celebrated result that non-singular cubic forms in $n+1 \geq 10$ variables have rational zeros \cite{HeathBrown}. By combining \cite{BLeBS} with a complete description of the probability that $X_f(\Q_p) \neq \emptyset$, we are able to further determine $\rho_{3,n}$ for $n \geq 4$ (see Theorem \ref{thm:rho_p_all_n}). The existence of nontrivial $p$-adic zeros for cubic forms in 10 or more variables was established by Dem'yanov for $p \neq 3$ \cite{Demyanov} and later by Lewis for all primes $p$ \cite{Lewis1952}. When $n=8$, work of Hooley shows that the Hasse principle holds for nonsingular cubic forms \cite{Hooley1988}. Heath-Brown's result is sharp in that there exist cubic forms in $9$ variables which fail to be everywhere locally soluble; this is reflected in Theorem \ref{thm:main_thm} as we compute $\rho_{3,8} < 1$, and extended to all $n \geq 4$ (conjecturally $n \geq 3$).

In light of \cite{BLeBS}, our strategy is to determine $\rhoELS_{3,n}$, the proportion of everywhere locally soluble cubic hypersurfaces in $\mathbb{P}^n$. We are able to do this by applying work of Poonen and Voloch \cite{poonenvoloch} and Bright, Browning, and Loughran \cite{BBL} to describe $\rho_{d,n}^{\mathrm{ELS}}$ as a product of local factors. These local factors are essentially the probabilities that $X_f(\Q_v) \neq \emptyset$ for a randomly chosen cubic form $f$ over $\Q_v$, which we determine explicitly and uniformly for finite places $v$. There have been other recent works studying everywhere local solubility in various families of hypersurfaces (see, for example, Browning, Fisher--Ho--Park, Hirakawa--Kanamura \cite{Browning,fisherhopark, HirakawaKanamura}). 

Most notably, Theorem \ref{thm:main_thm} may be regarded as a cubic analogue of work of Bhargava, Cremona, Fisher, Jones, and Keating on the density of integral quadratic forms in $n+1$ variables with a nontrivial integral zero; in particular, they give an explicit description of $\rho_{2,n}$ as a product of local probabilities which are rational functions in $p$ \cite{BCFJK}.

Our methods also recover $\rhoELS_{3,2}$, the density of everywhere local soluble plane cubic curves, first computed by Bhargava, Cremona, and Fisher as a product of explicit local densities \cite{BCF_plane_cubic}. Less is known about the density of plane cubics with a global point, $\rho_{3,2}$. Bhargava showed both that a positive proportion of plane cubics fail the Hasse principle and that a positive proportion have a rational point \cite[Theorems 1, 2]{bhargava2014positive}, i.e.\ $0 < \rho_{3,2} < \rhoELS_{3,2}$. Combining his methods with conjectures on the distribution of ranks of elliptic curves over $\Q$, Bhargava further conjectured $\rho_{3,2} = \frac13\rhoELS_{3,2}$ \cite[Conjecture 6]{bhargava2014positive}.

This paper is organized as follows. In Section \ref{sec:ELS} we give an overview of the strategy of the paper. In Section \ref{sec:factorization} we analyze the possible ways in which a cubic form can factor over a finite field and determine their probabilities of occurrence in cases of interest. In Sections \ref{sec:lifting_1} and \ref{sec:lifting_not_1}, we compute the probability that a $p$-adic cubic hypersurface has a $\Q_p$-point given certain conditions on its reduction $\overline{X_f}$; the former handles the case where the reduction is not a configuration of conjugate hyperplanes over $\F_p$, while the latter handles precisely these cases. In Section \ref{sec:explicit_functions} we prove Theorem \ref{thm:main_thm} and give the explicit formulas for the local factors, and in Section \ref{sec:numerics} we describe how we obtain precise numerical values.

\subsection*{Acknowledgments} The authors are grateful to Jackson Morrow for bringing \cite{BLeBS} to their attention and would like to thank Tim Browning, Tom Fisher, Rachel Newton, and Bjorn Poonen for helpful comments on an earlier draft. CK was supported by the Additional Funding Programme for Mathematical Sciences, delivered by EPSRC (EP/V521917/1) and the Heilbronn Institute for Mathematical Research. 

\section{Everywhere local solubility}
\label{sec:ELS}

Let $X/\Q$ be a variety and $v$ denote a place of $\Q$.

\begin{definition}
    $X$ is \textbf{locally soluble at} $\boldsymbol{v}$ if $X(\Q_v) \neq \emptyset$ and \textbf{everywhere locally soluble} if $X(\Q_v) \neq \emptyset$ for all places $v$.
\end{definition}

It follows from the inclusions $X(\Q) \hookrightarrow X(\Q_v)$ that everywhere local solubility is necessary for $X(\Q) \neq \emptyset$. Thus in studying how often a degree $d$ hypersurface $X_f$ has a rational point, it will be useful to keep track of how often $X_f$ is everywhere locally soluble. We denote by $\rhoELS_{d,n}$ the natural density of integral degree $d$ forms $f$ (with respect to the Euclidean height $\Ht(f) = ||\boldsymbol{a}||_2$) for which the hypersurface $X_f \subset \P^n$ is everywhere locally soluble, given in \eqref{eq:def_rhoELS}.

The limit definition of $\rhoELS_{d,n}$ is unwieldy to compute with. However, it follows from work of Bright, Browning, and Loughran \cite[Theorem 1.4]{BBL} that when $n \geq 2$ and $(d,n) \neq (2,2)$,
\begin{equation}\label{eq:product_formula}
	\rhoELS_{d,n} = \rho_{d,n}(\infty) \prod_p \rho_{d,n}(p),
\end{equation}
where $\rho_{d,n}(v)$ is the density of $v$-adic degree $d$ hypersurfaces with a $v$-adic point. This is made precise for the finite places by letting $\mu_p$ to be the normalized Haar measure on $\Z_p^{\binom{n+d}{d}}$, the space of integral $p$-adic degree $d$ forms, and taking
\[\rho_{d,n}(p) = \mu_p \left( \left\{ f \in \Z_p[x_0, \ldots, x_n] : X_f(\Q_p) \neq \emptyset \right\} \right),\] 
where $f$ is identified with the tuple corresponding to its coefficients in $\Z_p^{\binom{n+d}{d}}$. 

In their original paper on random hypersurfaces, Poonen and Voloch proved \eqref{eq:product_formula} with a different choice of height function when defining $\rhoELS_{d,n}$ and $\rho_{d,n}(\infty)$ \cite[Theorem 3.6]{poonenvoloch}; see Remark \ref{rem:choices}. Similar product formulae for the densities of everywhere local solubility hold in other families of varieties. Poonen and Stoll showed that a version of \eqref{eq:product_formula} holds for the density of everywhere locally soluble hyperelliptic curves \cite{poonenstoll_short}, in work which predates \cite{poonenvoloch}. Bright, Browning, and Loughran generalized their approach to show the analogue of \eqref{eq:product_formula} holds for families coming from fibers of maps to affine or projective space, subject to certain geometric conditions \cite{BBL}. This was employed by the authors to prove an analogue of \eqref{eq:product_formula} for families of superelliptic curves \cite{BeneishKeyes_density}. In another direction, Fisher, Ho, and Park show that the analogue of \eqref{eq:product_formula} holds for families of hypersurfaces in products of projective space \cite[Theorem 1.1]{fisherhopark}.

\begin{remark}[choices when defining densities]
\label{rem:choices}    
    Let $\Psi \subset \R^{\binom{n+d}{d}}$ denote a bounded subset of positive measure with boundary measure zero, and define
    \[\rhoELS_{d,n,\Psi} = \lim_{A \to \infty} \frac{\# \{ f \in A\Psi \cap \Z^{\binom{n+d}{d}} : X_f \text{ ELS}\}}{\# \{ f \in A\Psi \cap \Z^{\binom{n+d}{d}}\}}.\]
    Let $\Psi' \subseteq \Psi$ denote the subset of $f$ for which $X_f(\R) \neq \emptyset$ and $\mu_\infty$ be the Lebesgue measure on $\R^{\binom{n+d}{d}}$. It follows from \cite[Proposition 3.2]{BBL} that
    \[\rhoELS_{d,n,\Psi} = \frac{\mu_\infty(\Psi')}{\mu_\infty(\Psi)} \prod_{p} \rho_{d,n}(p).\]
    We take $\rho_{d,n,\Psi}(\infty) = \frac{\mu_\infty(\Psi')}{\mu_\infty(\Psi)}$ to get \eqref{eq:product_formula} and note that the product over finite places does not depend on $\Psi$. 
    
    Different choices of $\Ht(f)$ correspond to different choices of $\Psi$. In their original paper, Poonen and Voloch used $\Ht(f) = ||\boldsymbol{a}||_\infty$, i.e.\ the maximum absolute value of the coefficients of $f$; this corresponds to taking $\Psi = [-1,1]^{\binom{n+d}{d}}$. The Euclidean height $\Ht(f) = ||\boldsymbol{a}||_2$, used here and in \cite{BLeBS}, corresponds to taking $\Psi$ to be a sphere of radius 1 in $\R^{\binom{n+d}{d}}$. In either case, $\rhoELS_{d,n,\Psi}/\rhoELS_{d,n,\Psi}(\infty)$ coincide. We also point out that in \cite{BLeBS}, Browning, Le Boudec, and Sawin count only \textit{primitive} $f$, i.e.\ those whose coefficients have no common divisor. By a standard M\"obius inversion argument, the densities $\rho_{d,n}$ and $\rhoELS_{d,n}$ are unchanged whether we choose to count all integral $f$ or just primitive $f$.
\end{remark}

Returning to families of cubic hypersurfaces, the local factors $\rho_{3,n}(v)$ can be computed explicitly. Since real cubic forms possess real zeros, we have $\rho_{3,n}(\infty) = 1$. For the finite places $v=p$, we prove that these local probabilities are given in terms of rational functions, \textit{uniformly} in $p$.

\begin{theorem}\label{thm:rho_p_all_n}
    Let $n \geq 1$. There exist $g_n(t), h_n(t) \in \Z[t]$ such that for all primes $p$ we have
    \[\rho_{3,n}(p) = \begin{cases}
    	1 - \frac{g_n(p)}{h_n(p)} & 1 \leq n \leq 8\\
    	1 & n \geq 9,
    	\end{cases} \]
    with $g_n, h_n$ given explicitly in \eqref{eq:g1} -- \eqref{eq:h8}.
\end{theorem}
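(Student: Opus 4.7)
The plan is to prove Theorem \ref{thm:rho_p_all_n} by stratifying the space $\Z_p^{\binom{n+3}{3}}$ of integral $p$-adic cubic forms according to the factorization and singularity type of the reduction $\overline{f} \in \F_p[x_0, \ldots, x_n]$, computing the Haar measure of each stratum together with the conditional probability of $p$-adic solubility inside it, and then summing. Because reduction modulo $p$ sends the normalized Haar measure to the uniform measure on $\F_p^{\binom{n+3}{3}}$, each stratum indexed by an $\F_p$-factorization type has measure equal to a rational function of $p$.

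First, I would dispose of the case $n \geq 9$ by invoking the theorems of Dem'yanov \cite{Demyanov} (for $p \neq 3$) and Lewis \cite{Lewis1952} (for $p = 3$): every cubic form in at least $10$ variables over $\Q_p$ has a nontrivial zero. Thus $X_f(\Q_p) \neq \emptyset$ for every $f \in \Z_p[x_0, \ldots, x_n]$ when $n \geq 9$, yielding $\rho_{3,n}(p) = 1$ and $g_n = 0$, $h_n = 1$.

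For $1 \leq n \leq 8$, I would proceed by cases using the enumeration of cubic factorization types over $\F_p$ carried out in Section \ref{sec:factorization}. Each $\F_p$-type $T$ has a probability $\pi_T(p)$, a rational function in $p$, that a uniformly random $\overline{f}$ has type $T$. For types where $\overline{X_f}$ contains a smooth $\F_p$-point, Hensel's lemma lifts that point to a $\Q_p$-point and the conditional solubility probability equals $1$. For the remaining non-hyperplane-configuration types where $\overline{X_f}$ is singular (for example, a reducible reduction with a suitable smooth component), I would apply the results of Section \ref{sec:lifting_1} to show the lifting probability is again identically $1$, so these strata also contribute a full $\pi_T(p)$ to the total solubility mass.

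The interesting contributions come from the \emph{conjugate-hyperplane} reductions: $\overline{f}$ is a cube of a linear form, or a product of Galois-conjugate linear forms whose only common zero locus carries no $\F_p$-smooth point of $\overline{X_f}$. In these cases one must descend: after changing variables so that the conjugate hyperplane configuration is in standard position and extracting the appropriate power of $p$, the $p$-adic solubility of $X_f$ reduces to the solubility of a lower-dimensional or rescaled cubic problem. This recursion is the content of Section \ref{sec:lifting_not_1}; it terminates and closes into an identity expressing the conditional probability as a rational function of $p$. Assembling the contributions,
\[
\rho_{3,n}(p) \;=\; \sum_T \pi_T(p)\cdot \Pr\bigl(X_f(\Q_p)\neq\emptyset \,\big|\, \overline{f} \text{ of type } T\bigr),
\]
yields a finite sum of rational functions in $p$ which simplifies to $1 - g_n(p)/h_n(p)$, with the explicit $g_n, h_n$ of equations \eqref{eq:g1}--\eqref{eq:h8}.

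The main obstacle I anticipate is in the recursive conjugate-hyperplane stratum: beyond verifying termination and extracting the correct rational function, one must show that the final formula is genuinely \emph{uniform} in $p$. Small residue characteristics, most dangerously $p = 2$ and $p = 3$, often produce anomalous Hensel behavior, yet the theorem allows no exceptional factors. The source of uniformity should be that every input to the computation---the count of cubic factorization types over $\F_p$, the number of smooth $\F_p$-points on each singular model, and the descent in the conjugate-hyperplane cases---is governed by polynomial identities in $p$, while Hensel lifting is uniformly available once smoothness is secured. Checking this uniformity case-by-case, and confirming that the various rational contributions combine cleanly into a single quotient $g_n(p)/h_n(p)$, is where the bulk of the work will lie.
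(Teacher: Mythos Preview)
Your proposal is correct and follows essentially the same strategy as the paper: stratify by the factorization type of $\overline{f}$, use Proposition~\ref{prop:lifting_prob_1} to see that the lifting probability is $1$ unless $\overline{f}$ is a product of conjugate hyperplanes (types $1,2,3$), and then handle those types by a recursive descent that produces rational-function relations solvable symbolically.

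Two points of comparison are worth noting. First, for $n \geq 9$ you invoke Dem'yanov--Lewis directly, whereas the paper instead feeds $n=9$ into its own recursive linear system and observes that the unique solution is $\rho_9(p)=1$; the $n>9$ case then follows by specialization to a $10$-variable subform. Both are valid, but the paper's route gives an independent reproof of the Dem'yanov--Lewis theorem as a byproduct. Second, your sketch asserts that the recursion ``terminates and closes into an identity'' without saying how; the paper's mechanism is a three-phase descent introducing auxiliary conditional probabilities $\sigma_{n,i}$, $\tau_{n,ij}$, $\theta_{n,ijk}$, the key closure being Lemma~\ref{lem:thetas}, where $\theta_{n,ijk}$ is expressed in terms of other $\theta_{n,jk\ell}$ rather than new quantities. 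This yields a finite $64 \times 64$ linear system over $\Q(p)$, solved in \texttt{Sage}. Finally, your concern about uniformity at $p=2,3$ turns out to be a non-issue: every ingredient (the $\xi_{n,i}$ counts of Lemmas~\ref{lem:xi_no_cond}--\ref{lem:xi_cond}, and the Hensel lifts at smooth points) is already uniform in $p$, so no case-by-case checking is needed.
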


In the following sections we set out to compute $\rho_{3,n}(p)$ for $n \geq 1$, thereby proving Theorem \ref{thm:rho_p_all_n}. Theorem \ref{thm:main_thm} then follows by applying \eqref{eq:product_formula} and \cite{BLeBS}. The strategy is to reduce modulo $p$ and wherever possible find points on $\overline{X_f}(\F_p)$ which may be lifted to $p$-adic points via Hensel's lemma. If $\overline{X_f}(\F_p)$ consists only of singular points, we apply a transformation and repeat this process of reducing and lifting. Along the way, we need to keep track of various \textit{factorization} and \textit{lifting} probabilities.

We begin in Section \ref{sec:factorization} by measuring how often a cubic form $f$ over $\F_p$ factors so that the associated hypersurface over $\F_p$ decomposes into a configuration of hyperplanes conjugate over $\F_{p^3}$; these are the aforementioned factorization probabilities. A key insight, made in Section \ref{sec:lifting_1}, is that when the reduction $\overline{X_f}$ is not one of these distinguished configurations, $X_f(\F_p)$ contains a point which lifts via Hensel's lemma to a point in $X_f(\Q_p)$. In these cases, the lifting probability is simply 1. When $\overline{X_f}$ is a configuration of conjugate hyperplanes, it possesses only singular $\F_p$-points, and the lifting probabilities present a challenge. In Section \ref{sec:lifting_not_1}, we write $\rho_{3,n}(p)$ as a linear combination of the appropriate factorization and lifting probabilities and investigate relations between them. This culminates in a large system of relations which can be solved symbolically by the computer algebra system \texttt{Sage} \cite{sagemath}, completing the proof of Theorem \ref{thm:rho_p_all_n} by obtaining the expressions \eqref{eq:g1} -- \eqref{eq:h8} for $n \leq 8$ and verifying $\rho_{3,n}(p) = 1$ when $n \geq 9$.

The case of binary cubic forms ($n=1$) was already known; see e.g.\ \cite[\S 1.2.3]{BCFG}. In the case of plane cubic curves ($n=2$), the conclusion of Theorem \ref{thm:rho_p_all_n} was shown by Bhargava, Cremona, and Fisher \cite{BCF_plane_cubic}. Indeed, for $n=1,2$, the $g_n(t)$ and $h_n(t)$ that we produce agree with these known results. While our overall strategy is similar to theirs in spirit, in dimensions $n > 2$ certain factorization cases require extra attention, and our approach requires three ``phases" rather than the two needed for the planar situation. Furthermore, we choose to block the variables $x_0, \ldots, x_n$ in a way that allows us to work essentially uniformly in $n$. For that reason, in what follows we allow $n=1,2$ and make note of similarities and differences for the reader familiar with \cite{BCF_plane_cubic}.

A feature of interest in this approach is that it is entirely uniform in $p$. The same cannot be said for local solubility problems in other families, including genus one hyperelliptic curves \cite{BCF_genus_one} and genus four trigonal superelliptic curves \cite{BeneishKeyes_density}, where the local densities are given by rational functions only for sufficiently large $p$. In the latter case, those rational functions also depend on the residue class of $p$ modulo 3. 

\begin{remark}[finite extensions]
    Theorem \ref{thm:rho_p_all_n} can be extended to finite extensions $K/\Q$ as follows. For a completion $K_v$ at a finite place $v$, let $\mathcal{O}_v$ denote the valuation ring and $\F_v$ the residue field with $q = \#\F_v$. At each finite place $v$, the $v$-adic Haar measure of the set of cubic forms $f \in \mathcal{O}_v[x_0, \ldots, x_n]$ with $X_f(\mathcal{O}_v) \neq \emptyset$ is denoted $\rho_{3,n}(v)$ and given explicitly by the same formula
    \[\rho_{3,n}(v) = 1 - \frac{g_n(q)}{h_n(q)}.\]
    Aside from the factorization probabilities presented in Section \ref{sec:factorization} for arbitrary finite fields $\F_q$, we elect to restrict to the setting of $K=\Q$, since this is where we have $\rho_{3,n} = \rhoELS_{3,n} = \prod_p \rho_{3,n}(p)$ for $n \geq 4$ due to \cite{BLeBS}. However, the proof of Proposition \ref{prop:lifting_prob_1} and the strategy in Section \ref{sec:lifting_not_1} generalize readily to $K_v$.
\end{remark}

\section{Factorization probabilities}
\label{sec:factorization}

Fix a prime power $q$ and let $\F_q$ denote the finite field with $q$ elements. Let $f \in \F_q[x_0, \ldots, x_n]$ be a nonzero cubic form. In Table \ref{tab:factorizations} below, we record the possible factorizations of $f$ over the algebraic closure $\overline{\F_q}$ along with their geometric descriptions. Here each (decorated) $\ell$ denotes a distinct linear form, $g$ denotes a quadratic form, and $\sigma$ denotes the Frobenius action generating the Galois group of an extension $\F_{q^r}/\F_q$.
\begin{table}[h]
\centering
\caption{Possible geometric factorizations of a cubic hypersurface over $\F_{q}$}
\label{tab:factorizations}
\renewcommand{\arraystretch}{1.25}
\begin{tabular}{l l l}
	\hline Symbol & Factorization & Description \\ \hline 
	$(1^3)$ & $f = \ell^3$ & triple hyperplane over $\F_q$\\
	$(1^21)$ & $f = \ell^2 \ell'$ & double hyperplane and hyperplane both over $\F_q$\\
	$(111)$ & $f = \ell_1 \ell_2 \ell_3$ & three distinct hyperplanes over $\F_q$ \\
	$(111)_2$ & $f = \ell (\sigma \ell) \ell'$ & two conjugate hyperplanes over $\F_{q^2}$ and hyperplane over $\F_q$ \\
	$(111)_3 $ & $f = \ell (\sigma \ell)(\sigma^2 \ell)$ & three conjugate hyperplanes over $\F_{q^3}$ \\
	$(21)$ & $f = g\ell$ & quadric hypersurface and hyperplane both over $\F_q$\\
	$(3)$ & $f$ & geometrically irreducible cubic hypersurface over $\F_q$\\ \hline
\end{tabular}
\end{table}

We are especially interested in the case where $f$ factors over $\overline{\F_q}$ as the product of conjugates of a linear form over $\F_{q^3}$, i.e.\ those of the form $(1^3)$ or $(111)_3$ in Table \ref{tab:factorizations}. When $n=2$, $(1^3)$ corresponds to a \textit{triple line}, and $(111)_3$ can be either a \textit{star}, or a  \textit{triangle} as described in \cite{BCF_plane_cubic}. We can make these precise for general $n$ as follows.

\begin{definition}\label{def:types}
    Let $f \in \F_q[x_0, \ldots, x_n]$ be a nonzero cubic form. Suppose there exists a linear form $\ell = b_0x_0 + \ldots + b_nx_n \in \F_{q^3}[x_0, \ldots, x_n]$ such that
    \[f = \prod_{\sigma \in \Gal(\F_{q^3}/\F_q)} \sigma \ell\]
    and whose coefficients span a dimension $i$ $\F_q$-subspace of $\F_{q^3}$,
    \[i = \dim_{\F_q} \Span\{ b_0,\ldots, b_n\}.\]
    Then we say $f$ has \textbf{factorization type} $\boldsymbol{i}$.
\end{definition}

The factorization type of $f$ is invariant under linear change of coordinates over $\F_q$ and scaling $f$ by an element of $\F_q^\times$. Type 1 is equivalent to $f = \ell^3$, or $X_f$ a hyperplane with multiplicity 3. Types 2 and 3 correspond to the $(111)_3$ factorization of Table \ref{tab:factorizations}, or $X_f$ composed of three conjugate hyperplanes; the pairwise intersections coincide for type 2, while they do not for type 3 (the generic case).

\begin{remark}\label{rem:isolate_vars}
	Suppose $f\in \F_q[x_0, \ldots, x_n]$ is a nonzero cubic form with factorization type $i \in \{1,2,3\}$. There is a bijection between $X_f(\F_q)$ and $\P^{n-i}(\F_q)$. This can be seen by making a linear change of variables resulting in $i = \dim_{\F_q} \Span\{ b_0,\ldots, b_{i-1}\}$ and $b_j = 0$ for $j \geq i$ and seeing that 
	\[X_f(\F_q) = \{[0 : \ldots : 0 : x_i : \ldots : x_n] : [x_i : \ldots : x_n] \in \P^{n-i}(\F_q)\}.\]
\end{remark}

Let $N_n$ denote the number of nonzero cubic forms $f \in \F_q[x_0, \ldots, x_n]$ and $N_{n,i}$ the number of such forms having factorization type $i$ for $i=1,2,3$. We take $\xi_{n,i} = N_{n,i}/N_n$ to be the probability that a nonzero cubic form has type $i$; this is equal to the probability (in the sense of Haar measure) that a primitive cubic form $f \in \Z_p[x_0, \ldots, x_n]$ has reduction $\overline{f}$ with type $i$. Let us further define
\[\xi_{n,0} = 1 - \xi_{n,1} - \xi_{n,2} - \xi_{n,3},\]
the probability of not being one of these three types.

\begin{lemma}\label{lem:xi_no_cond}
    Let $n \geq 0$. We have the following values for $\xi_{n,i}$. 
    \begin{align*}
        \xi_{n,0} &= 1 - \frac{q^{3n-3} + 2q^{n+3} + 2q^{n+2} + 2q^{n+1} - 2q^2 - 2q - 3}{3\left(q^2+q+1\right)\left(q^{\binom{n+3}{3}}-1\right)}\\ 
        \xi_{n,1} &= \frac{q^{n+1}-1}{q^{\binom{n+3}{3}} - 1}\\ 
        \xi_{n,2} &= \frac{{\left(q^{2  n + 1} - q^{n + 1} - q^{n} + 1\right)} q}{3\left(  q^{\binom{n+3}{3}} - 1\right)}\\
        \xi_{n,3} &= \frac{{\left(q^{3  n} - q^{2  n} - q^{2  n + 1} - q^{2  n - 1} + q^{n + 1} + q^{n - 1} + q^{n} - 1\right)} q^{3}}{3  {\left(q^{2} + q + 1\right)} {\left(q^{\binom{n+3}{3}} - 1\right)}}
    \end{align*}
\end{lemma}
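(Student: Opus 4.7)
The argument is a direct count from Definition~\ref{def:types}. The total number of nonzero cubic forms is $N_n = q^{\binom{n+3}{3}} - 1$, so it suffices to compute $N_{n,i}$ for $i = 1, 2, 3$ and recover $\xi_{n,0} = 1 - \xi_{n,1} - \xi_{n,2} - \xi_{n,3}$ from its definition. For each $i$ I parameterize type-$i$ cubics via the surjective map
\[
\phi \colon V_i \longrightarrow \{\text{type } i \text{ cubics}\},
\qquad
\ell \longmapsto \prod_{\sigma \in \Gal(\F_{q^3}/\F_q)} \sigma \ell,
\]
where $V_i \subset \F_{q^3}[x_0,\ldots,x_n]_1$ is the set of nonzero linear forms whose $(n+1)$-tuple of coefficients spans an $\F_q$-subspace of $\F_{q^3}$ of dimension exactly $i$. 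Then $N_{n,i} = |V_i|/|\phi^{-1}(f)|$ for any $f$ in the image.

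The sizes $|V_i|$ come from stratifying $\F_{q^3}^{n+1}$ by $\F_q$-span. There are $q^2+q+1$ one-dimensional and $q^2+q+1$ two-dimensional $\F_q$-subspaces of $\F_{q^3}$. Choosing a line and a nonzero tuple inside it gives $|V_1| = (q^2+q+1)(q^{n+1}-1)$; inclusion--exclusion over the $q+1$ lines of a fixed plane $W$ counts the $(n+1)$-tuples in $W^{n+1}$ which actually span $W$, yielding $|V_2| = (q^2+q+1)(q^{2(n+1)} - (q+1)q^{n+1} + q)$. The remaining tuples account for $|V_3| = q^{3(n+1)} - 1 - |V_1| - |V_2|$.

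The fiber analysis rests on a Galois-theoretic dichotomy. Any $\ell' \in V_i$ with $\phi(\ell') = \phi(\ell)$ must satisfy $\ell' = c\,\sigma^j \ell$ for some $j \in \{0,1,2\}$ and some $c \in \F_{q^3}^\times$ with $\Nm(c) = 1$. When $i \geq 2$, the three conjugates $\ell, \sigma\ell, \sigma^2\ell$ are pairwise non-proportional over $\F_{q^3}$: if $\sigma\ell = c\ell$ then each nonzero ratio $b_j/b_k$ is Frobenius-fixed, hence in $\F_q$, forcing the span to be one-dimensional and contradicting $i \geq 2$. Thus the fiber has size $3(q^2+q+1)$. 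When $i = 1$, writing $\ell = v\ell_0$ with $\ell_0 \in \F_q[x_0,\ldots,x_n]_1$ shows each $\sigma^j \ell$ is already an $\F_{q^3}$-scalar multiple of $\ell$, so the three Galois orbits collapse into one and the fiber has size only $q^2+q+1$.

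Combining these gives $N_{n,1} = q^{n+1} - 1$ and $N_{n,i} = |V_i|/(3(q^2+q+1))$ for $i = 2, 3$; dividing by $N_n$ and simplifying produces the stated expressions for $\xi_{n,1}, \xi_{n,2}, \xi_{n,3}$, after which $\xi_{n,0}$ follows by algebraic manipulation of the defining identity. The main bookkeeping hurdles are the inclusion--exclusion for $|V_2|$ and the verification that the fiber of $\phi$ triples in size (from $q^2+q+1$ to $3(q^2+q+1)$) as the $\F_q$-span jumps from dimension one to two; everything else is routine simplification.
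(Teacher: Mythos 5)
Your proposal is correct, and it follows the same overall strategy as the paper: count the linear forms $\ell$ over $\F_{q^3}$ with coefficient span of dimension $i$, then divide by the multiplicity of the map $\ell \mapsto \prod_\sigma \sigma\ell$. The two arguments differ in how each half is executed. For the enumeration of $|V_i|$, the paper normalizes $\ell$ so its first nonzero coefficient equals $1$ and sums over the positions $s$ (first nonzero coefficient), $t$ (first coefficient outside $\F_q$), and $u$ (first coefficient outside $\Span_{\F_q}\{1,b_t\}$), which produces the telescoping double and triple sums in its proof; you instead stratify $\F_{q^3}^{n+1}$ by the exact $\F_q$-subspace spanned, using the count $q^2+q+1$ of lines and planes in $\F_{q^3}$ together with inclusion--exclusion over the $q+1$ lines of a plane. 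Your version is cleaner, avoids the sums entirely, handles $n=0$ uniformly (the paper treats it separately), and dovetails with the Grassmannian interpretation the paper records in the remark following the lemma. For the multiplicity, the paper simply says it is ``accounting for conjugating $\ell$ and scaling'' (its factor $\tfrac{q-1}{3}$ is the normalized-representative version of your fiber size $3(q^2+q+1)$), whereas you make the fiber computation explicit: the dichotomy between the conjugates being pairwise non-proportional (for $i\geq 2$) versus collapsing to a single associate class (for $i=1$), with the norm-one condition $\Nm(c)=1$ accounting for the factor $q^2+q+1$. This is exactly the point the paper glosses over, and your Frobenius-fixed-ratio argument correctly justifies why the fiber triples in size precisely when $i \geq 2$. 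I verified that your $|V_2|$, $|V_3|$, and fiber sizes reproduce the paper's $N_{n,i}$ exactly.
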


\begin{proof}
    When $n=0$, we have $f = ax_0^3$ which has type 1. Thus $\xi_{0,1} = 1$ and $\xi_{0,0} = \xi_{0,2} = \xi_{0,3} = 0$, which are in agreement with the stated formulas. 

    From here on we assume $n \geq 1$. We have $N_n = q^{\binom{n+3}{3}} - 1$. The key idea is to count the linear forms into which $f$ factors appropriately, and account for multiplicity due to scaling and conjugation. To compute $N_{n,1}$, the number of cubic forms in $n+1$ variables that factor as a triple hyperplane, we count linear forms over $\F_q$. Since there are $\frac{q^{n+1}-1}{q-1}$ such hyperplanes, after accounting for possible scaling, we obtain 
    \[N_{n,1} = q^{n+1} - 1.\]

    The value of $N_{1,2}$ was computed in \cite[Lemma 5]{BCF_plane_cubic}. For general $n$, we compute $N_{n,2}$ by again counting linear forms $\ell = \sum_{k=0}^n b_k x_k$, but this time they are defined over $\F_{q^3}$ and their coefficients span a 2-dimensional $\F_q$-subspace. After scaling, we may assume that the first nonzero coefficient occurs at index $s$ and is $b_s = 1$, and the first coefficient $b_k \notin \F_q$ occurs at index $t$, for which there are $q^3 - q$ possibilities. Thus we have, after accounting for conjugating $\ell$ and scaling at the end, 
    \begin{align*}
        N_{n,2} &= \frac{q-1}{3}\sum_{0 \leq s < n} \sum_{s < t \leq n} q^{t-s-1}(q^3-q)q^{2(n-t)}\\
        &= \frac{(q-1)(q^3-q)}{3} \sum_{0 \leq s < n} q^{n-s-1}\left( \frac{q^{n-s} - 1}{q-1} \right)   \\      
        &= \frac{q \left( q^{2n+1} - q^{n+1} - q^n + 1 \right)}{3}.
    \end{align*}

    The value of $N_{2,3}$ also appeared implicitly in \cite[\S 2.2.3]{BCF_plane_cubic}. For general $n$, the calculation of $N_{n,3}$ is quite similar to that of $N_{n,2}$, with indices $s,t$ as above, and also a first coefficient $b_u$ that is not contained in $\Span_{\F_q}\{1,b_t\}$. Thus we compute
    \begin{align*}
        N_{n,3} &= \frac{q-1}{3} \sum_{0 \leq s < n-1}\sum_{s < t < n}\sum_{t < u \leq n} q^{t-s-1}(q^3-q)q^{2(u-t-1)}(q^3-q^2)q^{3(n-u)}\\
        &= \frac{(q-1)(q^3-q)(q^3-q^2)}{3} \sum_{0 \leq s < n-1}\sum_{s < t < n} q^{2n-s-t-3}\left( \frac{q^{n-t} - 1}{q-1} \right) \\
        &= \frac{(q^3-q)(q^3-q^2)}{3} \sum_{0 \leq s < n-1}\left( \frac{q^{n-s-1}(q^{2n-2s-2}-1)}{q^2-1} - \frac{q^{n-s-2}(q^{n-s-1}-1)}{q-1} \right)\\
        &= \frac{q(q^3-q^2)}{3}\left( \frac{q^3(q^{3n-3}-1)}{q^3-1} - \frac{q(q^{2n-2}-1)}{q-1} + \frac{q^{n-1}-1}{q-1} \right).
    \end{align*}
    Finally, we have $ N_{n,0} = N_n - N_{n,1} - N_{n,2} - N_{n,3}$ and the stated values of $\xi_{n,i}$ follow from $\xi_{n,i} = N_{n,i}/N_{n}$.
\end{proof}

\begin{remark}
	We can also interpret the counts $N_{n,i}$ for $i=1,2,3$ above in terms of the number of $\F_q$-points on certain Grassmannians. Recall $\Gr(r,k)$ is a variety whose $\F_q$-points parametrize $r$-dimensional subspaces of $\F_q^k$. For $n \geq i$, we have
	\[N_{n,i} = \# \Gr(n+1 - i,n+1)(\F_q) \cdot N_{i-1,i}.\]
	To see this, recall from Remark \ref{rem:isolate_vars} that a cubic form $f \in \F_q[x_0, \ldots, x_n]$ with factorization type $i$ has its geometric components intersect over a codimension $i$ linear subspace of $\P^n$ over $\F_q$. There are $\# \Gr(n+1 - i,n+1)(\F_q)$ choices for this subspace. Changing coordinates, we may assume this intersection is at $x_0 = \cdots = x_{i-1} = 0$. In order for $f = \prod \sigma(\ell)$ to vanish on this subspace, it must involve only the coordinates $x_0, \ldots, x_{i-1}$, and by definition there are $N_{i-1, i}$ such $f$ with factorization type $i$.
\end{remark}

\subsection{Additional conditions}

We continue with $f \in \F_q[x_0, \ldots, x_n]$ a nonzero cubic form and $X_f/\F_q$ the associated cubic hypersurface. Recalling our notation \eqref{eq:cubic_form}, we write $f = \sum_{0 \leq i \leq j \leq k \leq n} a_{ijk}x_ix_jx_k$.

\begin{definition}
    We say $f$ satisfies \textbf{condition (1)}, or the \textbf{point condition}, if $a_{000} \neq 0$, i.e.\ $X_f$ does not contain the $\F_q$-\textit{point} $[1:0:\ldots :0]$.
\end{definition}

\begin{definition}
    We say $f$ satisfies \textbf{condition (2)}, or the \textbf{line condition}, if $f(x_0, x_1, 0, \ldots, 0)$ is an irreducible binary cubic form over $\F_q$, i.e.\ the intersection of $X_f$ with the \textit{line} $x_2 = \ldots = x_n = 0$ contains no $\F_q$-points.
\end{definition}

In \cite{BCF_plane_cubic, BCFJK}, similar point and line conditions are used in recursive computations of the probability of $p$-adic solubility for plane cubic curves and quadratic forms. To deal with cubic hypersurfaces of higher dimension, we find we will need an additional condition.

\begin{definition}
   We say $f$ satisfies \textbf{condition (3)}, or the \textbf{plane condition}, if $f(x_0, x_1, x_2, 0, \ldots, 0)$ has factorization type 3 (the triangle configuration), i.e.\ the intersection of $X_f$ with the \textit{plane} $x_3 = \ldots = x_n = 0$ contains no $\F_q$-points.
\end{definition}

Let $\xival[j]{n}{i}$ denote the probability that a nonzero cubic form \textit{satisfying condition $(j)$} has factorization type $i$ for $1 \leq i,j \leq 3$. This is again equal to the probability (in the sense of Haar measure) that a primitive cubic form $f \in \Z_p[x_0, \ldots, x_n]$ with reduction $\overline{f}$ satisfying condition $j$ has factorization type $i$. As before, we further define
\[\xival[j]{n}{0} = 1 - \xival[j]{n}{1} - \xival[j]{n}{2} - \xival[j]{n}{3},\]
the probability of not being one of these three types. For $n=1,2$, $\xival[j]{n}{i}$ were computed in \cite[Proposition 7]{BCF_plane_cubic}.

\begin{lemma}\label{lem:xi_cond}
    For $n \geq 1$ and $j \leq n+1$ we have the following values for $\xival[j]{n}{i}$.
    \begin{center}
    \renewcommand{\arraystretch}{2.5}
    \begin{tabular}{| c | c  c  c |}
        \hline $i \backslash j$  & 1 & 2 & 3 \\ \hline
        0 & $\displaystyle 1 - \frac{q^{2n} + q^{n + 2} - q^{n} + 2}{3  q^{\binom{n+3}{3}-n-1}}$ 
          & $\displaystyle 1 - \frac{1}{q^{\binom{n+3}{3}-3n-1}}$
          & $\displaystyle 1 - \frac{1}{q^{\binom{n+3}{3}-3n-8}}$\\ 
        1 & $\displaystyle \frac{1}{q^{\binom{n+3}{3}-n-1}}$ 
          & 0 
          & 0 \\ 
        2 & $\displaystyle \frac{{\left(q + 1\right)} {\left(q^{n + 1} - 1\right)}}{3  q^{\binom{n+3}{3}-n-1}}$ 
          & $\displaystyle \frac{1}{q^{\binom{n+3}{3}-2n-2}}$
          & 0 \\  
        3 & $\displaystyle \frac{q^{2  n - 1} - q^{n - 1} - q^{n} + 1}{3  q^{\binom{n+3}{3} - n - 2}}$ 
          & $\displaystyle \frac{q^{n - 1} - 1}{q^{\binom{n+3}{3} - 2n - 2}}$
          & $\displaystyle \frac{1}{q^{\binom{n+3}{3}-3n-4}}$ \\ \hline 
    \end{tabular}
    \end{center}
\end{lemma}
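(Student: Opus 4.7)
The plan is to compute $\xival[j]{n}{i} = N_{n,i}^{(j)}/N_n^{(j)}$, where $N_{n,i}^{(j)}$ counts the nonzero cubic forms of factorization type $i$ satisfying condition $(j)$ and $N_n^{(j)}$ counts those satisfying condition $(j)$. The denominators are straightforward: $N_n^{(1)} = (q-1)q^{\binom{n+3}{3}-1}$, and since conditions $(2)$ and $(3)$ constrain only the coefficients appearing in the restriction of $f$ to the first two (four coefficients) or three (ten coefficients) variables respectively, we have $N_n^{(2)} = N_{1,2} \cdot q^{\binom{n+3}{3}-4}$ and $N_n^{(3)} = N_{2,3} \cdot q^{\binom{n+3}{3}-10}$, using $N_{1,2}$ and $N_{2,3}$ from Lemma \ref{lem:xi_no_cond}.

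For the numerators of nonzero entries with $i \geq 2$, the key observation is that each condition translates into a linear algebra condition on the coefficient vector of the linear form $\ell = b_0 x_0 + \cdots + b_n x_n \in \F_{q^3}[x_0, \ldots, x_n]$ appearing in the factorization $f = \prod_{\sigma \in \Gal(\F_{q^3}/\F_q)} \sigma \ell$. Since the restriction of $f$ to the first $j$ variables equals $\prod_\sigma \sigma(b_0 x_0 + \cdots + b_{j-1} x_{j-1})$, condition $(1)$ is equivalent to $b_0 \neq 0$ (using $a_{000} = \Nm_{\F_{q^3}/\F_q}(b_0)$), condition $(2)$ to $b_0, b_1$ being $\F_q$-linearly independent in $\F_{q^3}$, and condition $(3)$ to $b_0, b_1, b_2$ being $\F_q$-linearly independent. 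The zero entries are then immediate: type-1 forms have $\ell \in \F_q[x_0, \ldots, x_n]$, so the restriction is a $\F_q$-rational cube and therefore neither irreducible nor of type 3, giving $\xival[2]{n}{1} = \xival[3]{n}{1} = 0$; and type-2 forms have $\Span_{\F_q}(b_0, \ldots, b_n)$ only two-dimensional, which precludes any three of the $b_k$ from being $\F_q$-linearly independent, yielding $\xival[3]{n}{2} = 0$.

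For the nonzero entries, I enumerate tuples $(b_0, \ldots, b_n) \in \F_{q^3}^{n+1}$ with the required $\F_q$-span dimension and the additional constraint above, then divide by the fiber size of $\ell \mapsto \prod_\sigma \sigma \ell$ to recover the number of cubic forms. This fiber size is $3(q^2+q+1)$ over any type-2 or type-3 form (three for the cyclic Galois action and $q^2+q+1 = |\ker \Nm_{\F_{q^3}/\F_q}|$ for norm-1 scalings, and these orbits are disjoint because $\ell$ and $\sigma\ell$ are not $\F_{q^3}^\times$-proportional when $i \geq 2$). The type-1 case, only appearing with $j = 1$, is handled more directly by enumerating $\F_q$-linear forms with $b_0 \neq 0$, yielding $\xival[1]{n}{1} = 1/q^{\binom{n+3}{3}-n-1}$. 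For types 2 and 3, sum over the 2-dimensional $\F_q$-subspaces $V \subset \F_{q^3}$ (there are $q^2+q+1$) or take $V = \F_{q^3}$, and count tuples in $V^{n+1}$ spanning $V$ subject to the relevant condition.

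The main technical step is executing this count via inclusion-exclusion. For type 2, tuples in $V^{n+1}$ not spanning $V$ lie in some 1-dimensional subspace, and inclusion-exclusion over the $q+1$ such subspaces yields the total spanning count; further subtracting tuples with $b_0 = 0$ or with $b_0, b_1$ dependent handles conditions $(1)$ and $(2)$. For type 3 with condition $(3)$, the count reduces to $|\mathrm{GL}_3(\F_q)| = (q^3-1)(q^3-q)(q^3-q^2)$ times the free choices for $b_3, \ldots, b_n$, whereas conditions $(1)$ and $(2)$ require a two-level subspace inclusion-exclusion over 1- and 2-dimensional $\F_q$-subspaces of $\F_{q^3}$. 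Assembling the results and simplifying produces the entries of the table.
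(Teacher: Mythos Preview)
Your proposal is correct, and it takes a genuinely different route from the paper. The paper computes $N_{n,i}^{(j)}$ by adapting the sequential ``first-index'' enumeration from Lemma~\ref{lem:xi_no_cond}: after normalizing so the first nonzero coefficient is at index $s$, the first coefficient outside $\F_q$ at index $t$, etc., the conditions $(j)$ simply pin down the initial indices (e.g.\ condition $(1)$ forces $s=0$, condition $(2)$ forces $s=0,\,t=1$), and one sums the remaining geometric series. Your approach instead packages the count structurally: you identify each condition $(j)$ with an $\F_q$-linear-independence requirement on $b_0,\ldots,b_{j-1}$, compute the fiber size of $\ell \mapsto \prod_\sigma \sigma\ell$ as $3(q^2+q+1)$ for types $2$ and $3$, and then count admissible tuples by summing over the target subspace $V$ and applying subspace inclusion--exclusion.

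Your approach is more conceptual and makes the vanishing entries $\xival[j]{n}{i}=0$ for $i<j$ transparent (the span-dimension constraint is immediately incompatible with the independence requirement), whereas the paper simply notes this consequence in passing. It also treats types $2$ and $3$ uniformly via the same fiber size, at the cost of a short argument that $\ell$ and $\sigma\ell$ are never $\F_{q^3}^\times$-proportional when $i\geq 2$. The paper's index-by-index enumeration, by contrast, is more elementary---no group actions or inclusion--exclusion are needed---and ties in directly with the earlier Lemma~\ref{lem:xi_no_cond}, making the computations shorter since the sums over $s,t,u$ collapse once some of those indices are fixed by condition $(j)$.
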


\begin{proof}
    The proof is analogous to that of Lemma \ref{lem:xi_no_cond}. Let $N_{n}^{(j)}$ denote the number of cubic forms $f$ in $n+1$ variables subject to condition $(j)$ and $N_{n,i}^{(j)}$ denote the number of such forms with factorization type $i$.

    Now allowing the point, line, and plane conditions to be applied, we compute $N_n^{(j)}$ by recognizing that the condition imposes a constraint on the coefficients in the first $j$ variables, with the rest free to be chosen from $\F_q$. Explicitly, we find
    \begin{align*}
        N_n^{(1)} &= (q-1)q^{\binom{n+3}{3} - 1}, \\
        N_n^{(2)} &= N_{1,2}q^{\binom{n+3}{3} - 4} = \frac{1}{3}(q-1)^2(q+1)q^{\binom{n+3}{3} - 3},\\
        N_n^{(3)} &= N_{2,3}q^{\binom{n+3}{3} - 10} = \frac{1}{3}(q-1)^3(q+1)q^{\binom{n+3}{3} - 7}.
    \end{align*}
    
    To compute $N_{n,i}^{(j)}$, we essentially repeat the calculations in Lemma \ref{lem:xi_no_cond}. The condition $(j)$ has the effect of simplifying several steps, much in the same way that it is easier to count monic polynomials than all polynomials. When $0 < i < j$ we have $N_{n,i}^{(j)} = \xival[j]{n}{i}=0$, since condition $(j)$ imposes that the coefficients of the linear form into which $f$ factors (possibly over $\F_{q^3}$) span a $j$-dimensional $\F_q$-vector space, ruling out type $i$.
    Here are the values of $N_{n,i}^{(j)}$, for completeness.
    \begin{align*}
        N_{n,1}^{(1)} &= (q-1)q^{\binom{n+3}{3}-1}\\
        N_{n,2}^{(1)} &= \frac{q-1}{3} \sum_{0 < t \leq n} q^{t-1}(q^3-q)q^{2(n-t)}\\
            &= \frac{1}{3}q^n(q^2-1)(q^{n}-1)\\
        N_{n,2}^{(2)} &= \frac{1}{3}(q-1)(q^3-q)q^{2n-2}\\
        N_{n,3}^{(1)} &= \frac{q-1}{3}\sum_{0 < t \leq n-1} \sum_{t < u \leq n} q^{t-1}(q^3-q)q^{2(u-t-1)}(q^3-q^2)q^{3(n-u)}\\
            &= \frac{1}{3} q^{n+1}(q-1)(q^{2n-1}-q^n-q^{n-1} + 1)\\
        N_{n,3}^{(2)} &= \frac{q-1}{3} \sum_{1 < u \leq n} (q^3-q)q^{2(u-2)}(q^3-q^2)q^{3(n-u)}\\
            &= \frac{1}{3} q^{2n-1}(q-1)(q^2-1)(q^{n-1}-1)\\
        N_{n,3}^{(3)} &= \frac{1}{3}q^{3n-3}(q-1)^3(q+1)
    \end{align*}
    We then compute $N_{n,0}^{(j)}$ as before and take the ratios with $N_{n}^{(j)}$ to obtain the stated values.
\end{proof}

\begin{remark}
	When $n=0$, we have $\xi_{0,1}^{(1)} = 1$ and $\xi_{0,i}^{(1)} = 0$ for $i=0,2,3$. When $i=2$ this is not evident from the value presented in Lemma \ref{lem:xi_cond} due to extra cancellation. From the proof, however, we do see $N_{0,2}^{(1)} = 0$.
\end{remark}

\section{Lifting probabilities that are 1}
\label{sec:lifting_1}

Let $f \in \Z_p[x_0, \ldots, x_n]$ be a primitive cubic form. We can characterize those $f$ for which $X_f$ has no $p$-adic points as follows.
\begin{proposition}\label{prop:lifting_prob_1}
    Let $n \geq 1$ and suppose $X_f(\Q_p) = \emptyset$. Then $\overline{f}$ has factorization type 1, 2, or 3 described above in Definition \ref{def:types}
\end{proposition}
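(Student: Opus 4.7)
The plan is to establish the contrapositive: if $\overline{f}$ does not have factorization type $1$, $2$, or $3$ in the sense of Definition \ref{def:types}, then $X_f(\Q_p) \neq \emptyset$. By Table \ref{tab:factorizations}, the factorization classes of $\overline{f}$ over $\overline{\F_p}$ that remain to be handled are $(1^21)$, $(111)$, $(111)_2$, $(21)$, and $(3)$. The uniform strategy is to exhibit a smooth $\F_p$-point of $X_{\overline{f}}$ in each case and invoke Hensel's lemma to lift it to a $\Q_p$-point of $X_f$.

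For the four reducible classes, the argument reduces to a point-counting exercise on complements of linear subvarieties of $\P^n$. In $(1^21)$ with $\overline{f} = \ell^2 \ell'$ and $(111)$ with $\overline{f} = \ell_1\ell_2\ell_3$, where all components are $\F_p$-linear, a smooth point is any $\F_p$-point lying on a single simple component; an inclusion--exclusion on $\F_p$-points of hyperplanes in $\P^n$ shows such points exist for every $n \geq 1$. In $(111)_2$ with $\overline{f} = \ell(\sigma\ell)\ell'$, the same reasoning applies upon noting that $\{\ell = 0\} \cup \{\sigma\ell = 0\}$ is Galois-stable and hence defined over $\F_p$. Case $(21)$, with $\overline{f} = g\ell$ and $g$ geometrically irreducible, proceeds analogously: the hyperplane $\{\ell = 0\}$ contains $\F_p$-points off the quadric $\{g = 0\}$ for all $n \geq 1$ (when $n=1$, this is immediate since an irreducible binary quadratic has no $\F_p$-roots).

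The main obstacle is case $(3)$, where $\overline{f}$ is geometrically irreducible. For $n \geq 3$, Chevalley--Warning produces an $\F_p$-point on $X_{\overline{f}}$, and since the singular locus of a geometrically irreducible cubic hypersurface has dimension at most $n-2$, a further dimension count (or projection from a singular point when one is available over $\F_p$) yields a smooth $\F_p$-point. For $n = 2$, a smooth plane cubic has an $\F_p$-point by the Hasse--Weil bound, while a singular geometrically irreducible plane cubic has a unique (hence $\F_p$-rational) singular point and a smooth locus birational to $\P^1$, containing at least $p - 1$ smooth $\F_p$-points. Case $(3)$ does not arise when $n = 1$, since every binary cubic splits into linear forms over $\overline{\F_p}$.

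With a smooth $\F_p$-point $P$ of $X_{\overline{f}}$ secured, Hensel's lemma, applied after dehomogenizing by a coordinate nonzero at $P$, lifts $P$ to a $\Q_p$-point of $X_f$, completing the contrapositive and therefore the proposition.
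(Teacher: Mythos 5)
Your reduction to the contrapositive and your treatment of the reducible classes $(1^21)$, $(111)$, $(111)_2$, $(21)$ are fine: in each of those cases there is a reduced $\F_p$-hyperplane component, and the point counts you sketch do produce a smooth $\F_p$-point for every $p$ and every $n\geq 1$, which Hensel lifts. The problem is case $(3)$ with $n\geq 3$, which is exactly the hard case, and your argument there has a genuine gap. A ``dimension count'' does not work over a finite field: knowing $\dim\mathrm{Sing}(X)\leq n-2$ gives no control on $\#X(\F_p)$ versus $\#\mathrm{Sing}(X)(\F_p)$, and a proper closed subvariety can perfectly well contain every $\F_p$-point of $X$. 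Chevalley--Warning only guarantees one point, and effective Lang--Weil-type lower bounds for geometrically irreducible hypersurfaces have error terms that swamp the main term for small $p$, so no counting argument of the kind you gesture at closes for $p=2,3$. Your parenthetical alternative, projection from a singular $\F_p$-point $P$, is the right idea but is not carried out, and it is not a one-liner: writing $f=x_0q(x_1,\dots,x_n)+c(x_1,\dots,x_n)$ with $P=[1:0:\cdots:0]$, you must (a) separately handle $q=0$, where $X$ is a cone over a geometrically irreducible cubic in $\P^{n-1}$ and you need a recursion down to the $n=2$ base case that your write-up never sets up, and (b) for $q\neq 0$, choose $v$ with $q(v)\neq 0$ and verify that the residual point $[-c(v):q(v)v]$ is actually a \emph{smooth} point of $X$ (it is, since $\partial f/\partial x_0$ evaluates there to $q(v)^3\neq 0$, but this verification is the content of the argument and is absent).

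It is worth noting that the paper deliberately avoids proving that a geometrically irreducible cubic hypersurface over $\F_p$ has a smooth $\F_p$-point. Instead, for $n>2$ it argues by induction on $n$: if $X_f(\Q_p)=\emptyset$ then every $\Q_p$-hyperplane section is insoluble, so by the inductive hypothesis every $\F_p$-hyperplane section of $\overline{X_f}$ is either the whole hyperplane or a union of conjugate hyperplanes, and Lemma \ref{lem:inductive step} then forces $\overline{f}$ itself to have type 1, 2, or 3. Your direct approach can be repaired along the lines sketched above (the reducible cases as you have them, plus a correct projection argument with the cone recursion for case $(3)$), but as written the key case is not proved.
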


The proof will proceed by induction on $n$. We will need an intermediate result for the inductive step.

\begin{lemma}\label{lem:inductive step}
    Let $n \geq 3$ and suppose $f \in \F_q[x_0, \ldots, x_n]$ is a nonzero cubic form. Suppose that for all hyperplanes $H \subset \P^n$ defined over $\F_q$, we have either that $H$ is an irreducible component of $X_f$ or $X_f \cap H \subset \P^{n-1}$ is the union of conjugate hyperplanes over $\F_{q^3}$. Then $f$ has type 1, 2, or 3.
\end{lemma}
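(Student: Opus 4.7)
The plan is to prove the contrapositive: assuming $f$ does not have factorization type $1$, $2$, or $3$, exhibit an $\F_q$-rational hyperplane $H \subset \P^n$ violating the hypothesis, i.e., $H$ is not an irreducible component of $X_f$ and yet $f|_H$ (viewed as a cubic on $H \cong \P^{n-1}$) is not of type $1$, $2$, or $3$. According to Table \ref{tab:factorizations}, the geometric factorizations to exclude are $(1^21)$, $(111)$, $(111)_2$, $(21)$, and $(3)$; I would split the argument into reducible versus geometrically irreducible cases.

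First I would dispatch the four reducible cases uniformly. In each of $(1^21)$, $(111)$, $(111)_2$, and $(21)$, the cubic $f$ has at least one linear factor $\ell_0 \in \F_q[x_0,\ldots,x_n]$. For any $\F_q$-rational hyperplane $H$ that is not a component of $X_f$, the nonzero form $\ell_0|_H$ is an $\F_q$-rational linear factor of $f|_H$. Since types $2$ and $3$ both factor as $\prod_{\sigma \in \Gal(\F_{q^3}/\F_q)} \sigma(\ell_H)$ with $\ell_H \in \F_{q^3} \setminus \F_q$, they admit no $\F_q$-rational linear factor; the hypothesis therefore forces $f|_H$ to be of type $1$, meaning $f|_H = \ell_H^3$ for some $\F_q$-linear $\ell_H$. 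I would then argue that requiring this for every eligible $H$ forces $f$ itself to be of type $1$, contradicting the case assumption. For example, in case $(21)$ with $f = g \cdot \ell_0$ and $g$ a geometrically irreducible quadric, the identity $g|_H \cdot \ell_0|_H = \ell_H^3$ forces $g|_H = c_H \ell_0|_H^2$ for every such $H$; since $n \geq 3$ provides enough $\F_q$-hyperplanes to detect proportionality, this gives $g = c \ell_0^2$ globally, contradicting the irreducibility of $g$. The remaining reducible cases are handled by the analogous principle that two non-proportional linear forms cannot restrict to proportional forms on every $\F_q$-rational hyperplane.

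For the geometrically irreducible case $(3)$, where $X_f$ is a geometrically integral cubic hypersurface of dimension $n - 1 \geq 2$, I would invoke Bertini's theorem: the locus $B \subsetneq (\P^n)^*$ of hyperplanes $H$ for which $X_f \cap H$ fails to be geometrically integral is a proper closed subvariety. A cubic of type $1$, $2$, or $3$ is, over $\overline{\F_q}$, a union of three hyperplanes (coincident, meeting in a codimension-$2$ subspace, or forming a triangle) and is therefore geometrically reducible or non-reduced; so no geometrically integral cubic section can be of type $1$, $2$, or $3$. It then suffices to produce an $\F_q$-rational hyperplane outside $B$, which would follow by comparing $\# B(\F_q)$, controlled via explicit bounds on $\deg B$ in terms of $\deg X_f = 3$, against $\#(\P^n)^*(\F_q) = (q^{n+1}-1)/(q-1)$, valid for every $q$ and every $n \geq 3$.

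The main obstacle will be verifying the existence of a good $\F_q$-rational hyperplane in the geometrically irreducible case for small $q$, where the generic Bertini argument must be supplemented by explicit point-counting estimates; by contrast, the four reducible cases reduce to elementary linear algebra once the structural observation about Galois orbits of linear factors is made.
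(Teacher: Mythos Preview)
Your approach is genuinely different from the paper's, and the geometrically irreducible case contains a real gap.

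The paper does not argue by contrapositive or split into the factorization types of Table~\ref{tab:factorizations}. Instead it works directly: by Chevalley's theorem there is a point $P \in X_f(\F_q)$, and after moving $P$ to $[0:\cdots:0:1]$ one examines only the coordinate hyperplanes $H_i \colon x_i = 0$. If some $x_i \mid f$, a short argument using another coordinate hyperplane forces $f$ to be a cubic in one variable times $x_i$ (hence type $1$, $2$, or $3$). Otherwise, each restriction $f_i$ for $i < n$ is a product of conjugate linear forms passing through $P$, so its $x_n$-coefficient vanishes; this kills every monomial involving $x_n$, giving $f = f_n$. The whole argument is elementary and uniform in $q$, using nothing beyond Chevalley and unique factorization.

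Your reducible cases are in reasonable shape, though the step ``$g|_H = c_H (\ell_0|_H)^2$ for all admissible $\F_q$-rational $H$ forces $g = c\ell_0^2$'' deserves a line of justification (e.g., via two points with distinct values of $g/\ell_0^2$ and an $\F_q$-hyperplane through both, which exists once $n \geq 3$). The serious problem is the case $(3)$. Classical Bertini requires an infinite field, and over $\F_q$ there is no off-the-shelf statement guaranteeing an $\F_q$-rational hyperplane with geometrically integral section for \emph{every} $q$; the available results (Poonen-type Bertini theorems, or Lang--Weil applied to the bad locus $B \subset (\P^n)^*$) give this only for $q$ large relative to the degree of $B$, and bounding $\deg B$ sharply enough to cover $q = 2,3,\ldots$ is not routine. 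You flag this as the ``main obstacle'' but do not carry it out, and I do not see a clean way to make the counting work uniformly. The paper sidesteps the issue entirely by never needing a generic hyperplane: a handful of coordinate hyperplanes, chosen after placing an $\F_q$-point at a coordinate vertex, suffice.
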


\begin{proof}
    By Chevalley's theorem, there exists a point $P \in X_f(\F_q)$. After a linear change of coordinates on $\P^n$, we may assume $P = [0 : \ldots : 0 : 1]$. Such a change of coordinates may change whether a given hyperplane $H$ is a component of $X_f$, or the type of $X_f \cap H$, but the hypothesis still holds, and $f$ has type $i=1,2,3$ if and only if it does after the change of coordinates.

    Let $H_i \colon x_i = 0$ denote a coordinate hyperplane and let $f_i = f(x_0, \ldots, x_{i-1}, 0, x_{i+1}, \ldots, x_n)$ be the (possibly zero) cubic form cutting.out the intersection $X_{f_i} = X_f \cap H_i \subseteq H_i \simeq \P^{n-1}$. Note that $H_i$ is an irreducible component of $X_f$ if and only if $f_i = 0$, or equivalently $x_i \mid f$. If $f_i \neq 0$ then $X_{f_i}$ is a configuration of conjugate hyperplanes, i.e. $f_i$ has factorization type 1, 2, or 3 from Definition \ref{def:types}.

    We first consider the case where $f_i = 0$ for some $i$. Since $n \geq 3$, there is at least one $j$ such that $f_j \neq 0$, hence we have $x_i \mid f_j$. Since $f_j$ is the product of conjugate linear forms, we must have $f_j = a_{iii}x_i^3$ (recall $a_{ijk}$ is the coefficient of $x_ix_jx_k$ in $f$ as in \eqref{eq:cubic_form}). In particular, this implies $x_{j'} \nmid f$ for all $j' \neq i,j$, and for any such $j'$ we have $f_{j'} = a_{iii}x_i^3$. From this we deduce
    \[f = a_{iii}x_i^3 + x_ix_jg_j = a_{iii}x_i^3 + x_ix_{j'}g_{j'}\]
    for linear forms $g_j, g_{j'} \in \F_q[x_0, \ldots, x_n]$. This forces $f = a_{iii}x_i^3 + a_{ijj'}x_ix_jx_{j'}$. Then there exists $k \notin \{i,j,j'\}$ such that $f = f_k\neq 0$ has factorization type 1, 2, or 3.
    
    Suppose now that $f_i \neq 0$ for all $i$. For $0 \leq i < n$ we have $f_i = \prod_{\sigma \in \Gal(\F_{q^3}/\F_q)} \sigma(\ell_i)$ for a nonzero linear form $\ell_i \in \F_{q^3}[x_0,  \ldots, x_n]$. Since $P \in X_{f_i}$ we  must have $\ell_i(0, \ldots, 0, 1) = 0$ and thus the $x_n$-coefficient of $\ell_i$ vanishes. Using this, we claim that $a_{ijn} = 0$ for all $i,j \in \{0, \ldots, n\}$. For any $i,j$, there exists $k \in \{0,\ldots, n\} - \{i,j,n\}$ and the $x_ix_jx_n$ terms of $f$ and $f_k$ coincide. But $f_k = \prod_{\sigma \in \Gal(\F_{q^3}/\F_q)} \sigma(\ell_k)$ for $\ell_k$ with zero $x_n$-coefficient, and thus $a_{ijn} = 0$. We conclude that no terms containing $x_n$ show up in $f$, and therefore $f = f_n$ has type 1, 2, or 3, as desired.   
\end{proof}

\begin{remark}
    Lemma \ref{lem:inductive step} fails to hold when $n = 2$, as $f = x_0x_1x_2$ is readily seen to be a counterexample. There also exist smooth and geometrically irreducible plane curves over $\F_q$ whose coordinate cubic forms are all type 1 or 2 (type 3 cannot occur for binary cubic forms). For example, when $3 \nmid q$ this is the case for diagonal plane cubics of the form $X \colon c_0 x_0^3 + c_1 x_1^3 + c_2 x_2^3 = 0$ if $c_0, c_1, c_2$ represent distinct classes in $\F_q^\times / (\F_q^\times)^3$.
\end{remark}

\begin{proof}[Proof of Proposition \ref{prop:lifting_prob_1}]
	For $n=1$, the case of primitive binary cubic forms $f \in \Z_p[x_0, x_1]$, we observe that factorization types 1 and 2 correspond to to $\overline{f}$ having a linear factor of multiplicity 3 and $\overline{f}$ being irreducible over $\F_p$, respectively. If $\overline{f}$ has neither of these factorization types, then it has a linear factor of multiplicity 1 and so must $f$ by Hensel's lemma.
	
	For $n \geq 2$ we proceed by induction on $n$. When $n=2$, the case of plane cubic curves, if $\overline{X_f}$ is smooth and geometrically irreducible then by an application of the Hasse--Weil bound on $\overline{X_f}$ we have
    \[\#\overline{X_f}(\F_p) \geq p + 1 - 2\sqrt{p} > 0.\]
    In particular, there exists a smooth $\F_p$-point to lift via Hensel's lemma to a point in $X_f(\Q_p)$. If $\overline{X_f}$ is geometrically irreducible but not smooth, its normalization $\widetilde{\overline{X_f}}$ is a smooth genus zero curve over $\F_p$ with $p+1$ $\F_p$-points. At most 2 of those points map to singular points of $\overline{X_f}$, so there are at least $p-1 > 0$ smooth $\F_p$-points on $\overline{X_f}$ which may be lifted to $X_f(\Q_p)$ via Hensel's Lemma.

    This leaves only the geometrically reducible cases. When $\overline{X_f}$ has a reduced line defined over $\F_p$ as a component, it contains $p+1$ $\F_p$-points; again, at most two of these intersect another component, leaving at least $p-1 > 0$ smooth points which may be lifted via Hensel's lemma. The only remaining possibilities lacking a reduced line defined over $\F_p$ as an irreducible component are precisely types 1, 2, and 3 of Definition \ref{def:types}, concluding the proof for $X_f$ a plane curve. 

    Suppose now that $n > 2$ and assume the conclusion holds for $n-1$. Since $X_f(\Q_p) = \emptyset$ then for all hyperplanes $H \subset \P^n$ defined over $\Q_p$ we have $(X_f \cap H)(\Q_p) = \emptyset$. Fixing a model for $H$ over $\Z_p$ and reducing modulo $p$, we have either $\overline{(X_f \cap H)} = \overline{H}$ or $\overline{(X_f \cap H)} \subsetneq \overline{H} \simeq \P^{n-1}_{\F_q}$ is a union of conjugate hyperplanes defined over $\F_{p^3}$ by the inductive hypothesis. Invoking Lemma \ref{lem:inductive step}, $\overline{f}$ must have type 1, 2, or 3.    
\end{proof}

\begin{remark}
    Kopparty and Yekhanin \cite[Lemma 3.2]{KoppartyYekhanin} show that for $d$ prime, $n \geq \frac{d}{2} + 1$, and $q \geq 32n^4$, if a degree $d$ form $f \in \F_q[x_0, \ldots, x_n]$ has no $\F_q$-solutions (other than $x_0 = \ldots = x_n = 0$), then there exists a linear form $\ell$ such that $f = \prod_{\sigma \in \Gal(\F_{q^d}/\F_q)} \sigma \ell$. Their proof invokes the Chevalley--Warning theorem as well as the Weil conjectures. 
\end{remark}

An immediate consequence of Proposition \ref{prop:lifting_prob_1} is that if the reduction $\overline{f}$ of a primitive cubic form $f \in \Z_p[x_0, \ldots, x_n]$ does not have type 1, 2, or 3, then $X_f(\Q_p) \neq \emptyset$. In other words, the probability of $X_f$ having a $\Q_p$ point, given this condition on $\overline{f}$, is 1:
\[\frac{\mu_p \left( \left\{f \in \Z_p[x_0, \ldots, x_n] : f \text{ primitive cubic form},\ \overline{f} \text{ not type } 1,\ 2,\ 3,\ \text{and } X_f(\Q_p) \neq \emptyset \right\}\right)}{\mu_p \left( \left\{f \in \Z_p[x_0, \ldots, x_n] : f \text{ primitive cubic form},\ \overline{f} \text{ not type } 1,\ 2,\ 3 \right\}\right)} = 1,\]
where $\mu_p$ denotes the normalized $p$-adic Haar measure as usual.

\section{Lifting probabilities that are not 1}
\label{sec:lifting_not_1}

Let us return to the challenge of computing $\rho_{3,n}(p)$, the probability that a nonzero cubic form over $\Z_p$ in $n+1$ variables has a solution, which we denote hereafter by $\rho_n(p)$, suppressing the degree, to streamline the notation. In this section we drop the dependence on $p$ for our notation, writing $\rho_n$ for the local probability $\rho_n(p)$. 

In the previous section, we determined that when $\overline{f}$ does not have type 1, 2, or 3, $X_f(\Q_p)$ is nonempty. The goal of this section is to complete the computation of $\rho_n(p)$, beginning with defining lifting probabilities conditional on $\overline{f}$ having type $i=1, 2, 3$.

\begin{definition}\label{def:sigma}
Let $\sigma_{n,i}$ denote the probability (in the sense of normalized Haar measure, as usual) that a primitive cubic form $f$ has a $\Q_p$-solution, given that $\overline{f}$ has factorization type $i$, 
\[\sigma_{n,i} = \frac{\mu_p \left( \{  f \in \Z_p[x_0, \ldots, x_n] : f \text{ primitive cubic form},\ \overline{f} \text{ has type } i,\ X_f(\Q_p) \neq \emptyset\} \right)}{\mu_p \left( \{ f \in \Z_p[x_0, \ldots, x_n] : f \text{ primitive cubic form},\ \overline{f} \text{ has type } i\} \right)}.\]
\end{definition}

By Proposition \ref{prop:lifting_prob_1} and the definition of $\xi_{n,i}$, we have
\begin{equation}\label{eq:rho_structure_eq}
    \rho_n = \xi_{n,0} + \xi_{n,1}\sigma_{n,1} + \xi_{n,2}\sigma_{n,2} + \xi_{n,3}\sigma_{n,3}.
\end{equation}
By reducing the problem to computing $\sigma_{n,i}$, we obtain some additional structure in being able to write our cubic forms as the product of conjugate linear forms. Namely, we will make key changes of variables that allow $\sigma_{n,i}$ to be related to new conditional lifting probabilities of cubic forms in fewer variables. This process will then be iterated until we have enough relations to solve for all the lifting probabilities.

Before embarking on this journey, let us define a few additional probabilities. Let $\rho_{n}^{(j)}$ denote the probability that a nonzero cubic form over $\Z_p$ in $n+1$ variables satisfying condition $(j)$ has a $\Q_p$-solution. We then have the analogue of \eqref{eq:rho_structure_eq},
\begin{equation}\label{eq:rho_(j)}
    \rho_n^{(j)} = \xival[j]{n}{0} + \sum_{i=1,2,3} \xival[j]{n}{i} \sigma_{n,i}.
\end{equation}

\begin{remark}\label{rem:compare_to_BCF}
In the $n=2$ case, Bhargava, Cremona, and Fisher \cite{BCF_plane_cubic} determine
\[\rho_2 = 1 - \frac{p^9 - p^8 + p^6 - p^4 + p^3 + p^2 - 2p + 1}{3(p^2 + 1)(p^4 + 1)(p^6 + p^3 + 1)}, \text{ so } 1- \rho_2 \sim \frac{1}{3p^3}.\]
Their approach takes advantage of the fact that plane curves reducing to a ``triangle" configuration, i.e.\ when $\overline{f}$ has factorization type 3, have no solutions, i.e.\ $\sigma_{2,3} = 0$. While they employ similar changes of variables and overall strategy, this allows for a considerably simpler web of relations. Our approach is not hindered by allowing $n=2$, so we will not exclude this case; the reader may view our results in the $n=2$ case as a repackaging of their result.

There are some differences between the two papers worth briefly highlighting. One minor difference comes from our decision for $\xi_{n,i}$ to be the probability that a \textit{nonzero} form up to scaling possesses factorization type $i$. Others appear to be merely notational (e.g.\ they denote with $\beta_{1}''$ what we denote by $\xi_{2,2}^{(1)}$), but we find substantial utility in being able to easily index both the factorization types $i=1,2,3$ and the conditions $(j)$. 
\end{remark}

\subsection*{Valuation tables}

Here we introduce a bookkeeping tool that will be employed extensively in the remainder of this section, which we refer to as a \textit{valuation table} for a $p$-adic cubic form $f \in \Z_p[x_0, \ldots, x_n]$. This table records known information about the $p$-adic valuations of the coefficients of $f$, streamlining how we keep track of the recursive process through which we are able to compute $\rho_n, \rho_n^{(j)}, \sigma_{n,i}, \sigma_{n,i}^{(j)}$, and other lifting probabilities we will soon define. 

Similar constructions appeared in the calculation of $\rho_2$ \cite[e.g.\ Lemma 12]{BCF_plane_cubic}, as well as in that of the density of soluble quadratic forms \cite[Lemma 3.3]{BCFJK}. To handle cubics in more than three variables while keeping things from getting (too) out of hand, we employ a \textit{blocking} strategy which allows our arguments to be essentially uniform in $n$.

Suppose we partition the set of variables $\{x_0, \ldots, x_n\}$ into $r$ (nonempty) subsets. Since for our purposes we only need $r \leq 4$, we label them by $S, T, U, W$ to avoid excessive decoration. Generally, each subset will consist of consecutively indexed variables, e.g.\ $S = \{x_0, \ldots, x_{i-1}\}$ and $T= \{x_i, \ldots, x_n\}$. 

This induces a partition of the set of cubic monomials $\{x_i x_j x_k : 0 \leq i \leq j \leq k \leq n\}$ into subsets of the form
\[S^sT^tU^uW^w = S^s \times T^t \times U^u \times W^w \text{ such that } s+t+u+w = 3,\]
that is, those monomials $x_ix_jx_k$ for which exactly $s$ of $x_i, x_j, x_k$ lies in $S$, exactly $t$ lie in $T$, etc. When $r < 4$, we drop the extraneous empty subset $W$, and possibly also $U$, from the notation altogether.

\begin{definition}
Let
\[v_{stuw} = \min \{v_p(a_{ijk}) : x_ix_jx_k \in S^sT^tU^uW^w \}.\]
A \textbf{valuation table for $\boldsymbol{f}$} is a collection of arrays, indexed by $w$, that records information about $v_{stuw}$ in the $u$-th row and $t$-th column.
\end{definition}

This is perhaps best seen by illustration for $r=2,3,4$ separately.

\begin{example}
    Suppose $r=2$, so $\{x_0, \ldots, x_n\} = S \coprod T$. Let
    \[v_{st} = \min\{v_p(a_{ijk}) : x_ix_jx_k \in S^sT^t\}.\]
    In this case, the valuation table is a single row, with columns indexed by the value of $t$ starting with $t=0$ on the left and $t=3$ on the right.
    \begin{table}[h]
    \centering
    \caption{The shape of a valuation table for a partition with $r=2$}
    \label{tab:sample_val_table_r=2}
    \begin{tabular}{c c c c}
         $*$ & $*$ & $*$ & $*$\\
    \end{tabular}
\end{table}
\end{example}

\begin{example}
Suppose $r=3$, so $\{x_0, \ldots, x_n\} = S \coprod T \coprod U$. Let
\[v_{stu} = \min\{v_p(a_{ijk}) : x_ix_jx_k \in S^sT^tU^u\}.\]
Now the valuation table forms a triangle, with the entry in the $u$-th row and $t$-th column (indexing by zero, so the top left entry is $u=t=0$, the bottom left entry is $(u,t) = (3,0)$, the top right is $(u,t) = (0,3)$, etc.) corresponds to information known about $v_{stu}$ with $s=3-t-u$.
    \begin{table}[h]
    \centering
    \caption{The shape of a valuation table for a partition with $r=3$}
    \label{tab:sample_val_table_r=3}
    \begin{tabular}{c c c c}
         $*$ & $*$ & $*$ & $*$\\
         $*$ & $*$ & $*$ & \\
         $*$ & $*$ & &\\
         $*$ &&&\\
    \end{tabular}
\end{table}
\end{example}

\begin{example}
Suppose $r=4$, so $\{x_0, \ldots, x_n\} = S \coprod T \coprod U \coprod W$. Let
\[v_{stuw} = \min\{v_p(a_{ijk}) : x_ix_jx_k \in S^sT^tU^uW^w\}.\]
We would like to represent information about $v_{stuw}$ along lattice points forming a tetrahedron, but we will settle for ``slicing" the tetrahedron at each value of $w$. 

That is, the information about $v_{stuw}$ is recorded in the $u$-th row and $t$-th column of the $w$-th slice (again indexing by zero as in the previous example), with $s=3-t-u-w$, of the table below.
    \begin{table}[h]
    \centering
    \caption{The shape of a valuation table for a partition with $r=4$}
    \label{tab:sample_val_table_r=4}
    \begin{tabular}{c c c c | c c c | c c | c}
        \multicolumn{4}{c|}{$w=0$} & 
        \multicolumn{3}{c|}{$w=1$} &
        \multicolumn{2}{c|}{$w=2$} & $w=3$ \\ \hline 
         $*$ & $*$ & $*$ & $*$ & $*$ & $*$ & $*$ & $*$ & $*$ & $*$\\
         $*$ & $*$ & $*$ &     & $*$ & $*$ &     & $*$ &     & \\
         $*$ & $*$ &     &     & $*$ &     &     &     &     & \\
         $*$ &     &     &     &     &     &     &     &     & \\
    \end{tabular}
\end{table}
\end{example}

A \textit{corner} entry in a valuation table corresponds to (exactly) one of $s,t,u,w$ equal to 3. For example, if $s=3$, the polynomial $h_S = \sum_{x_i, x_j, x_k \in S} a_{ijk}x_ix_jx_k$ corresponds to the cubic form in $\#S$ variables obtained from $f$ by setting $x_i=0$ for all $x_i \in T,U,W$. The other corner entries similarly represent cubic forms in only the variables in $T,U,W$, etc.\ Note that when $r=4$, the single entry in the $w=3$ slice is a corner, but none of the entries in the $w=1,2$ slices are corners.

We will occasionally decorate the corner entries to keep track of additional information. A subscript $*_i$ for $i=1,2,3$ indicates \textit{reduction type} $i$: that is, if the $s=3$ corner entry is decorated as such, then the cubic form $h_S$ described above has reduction type $i$ after dividing out by any common factors of $p$.

A superscript $*^{(j)}$ for $j=1,2,3$ indicates \textit{condition $(j)$}: again using $s=3$, this indicates that $h_S$ above satisfies the point, line, or plane condition ($j=1,2,3$ respectively). Note that we are assuming an ordering on the variables within $S$ (or $T, U, W$) in declaring that $h_S$ satisfies condition $(j)$. We will always mean this to be with the variables in increasing order. For example, if $S=\{x_0, \ldots, x_{i-1}\},\ T = \{x_i, \ldots, x_n\}$, and the $t=3$ corner entry in a valuation table reads $=0^{(j)}$, it indicates that the cubic form
\[h_T(x_i, \ldots, x_n) = f(0, \ldots, 0, x_i, \ldots, x_n)\]
is nonvanishing modulo $p$ and satisfies condition $(j)$. That is, after setting all but its first $j$ variables $x_i, \ldots, x_{i+j-1}$ to zero, we only have the trivial solution modulo $p$,
\[\overline{h_T}(x_i, \ldots, x_{i+j-1}, 0, \ldots, 0) =0 \implies x_i = \ldots = x_{i+j-1} = 0.\]

\subsection{Phase I}

We first set out to compute $\sigma_{n,i}$. In this first phase of computation, we will employ key changes of variables which will become a recurring theme in later phases. The idea is that when $f$ has factorization type $i$, we can produce another $p$-adic cubic form $\fI$ such that $X_f(\Q_p)$ and $X_{\fI}(\Q_p)$ are in bijection, with $\fI$ taking a shape particularly amenable to further analysis.

Suppose $f \in \Z_p[x_0, \ldots, x_n]$ is a cubic form with factorization type $i$. After scaling, we may assume $f$ is primitive, i.e.\ $p$ does not divide all coefficients. Recall that $\overline{f} = \prod_{\sigma \in \Gal(\F_{p^3}/\F_p)} \sigma (b_0 x_0 + \ldots + b_n x_n)$ where $\dim \Span\{b_j\}_{j=0}^n = i$. After a linear change of the coordinates $x_j$, we may assume that $\{b_j\}_{j=0}^n$ is spanned by its first $i$ elements $b_0, \ldots, b_{i-1}$, and thus also that $b_j = 0$ for all $j \geq i$.

Translating things back to $f$, after the aforementioned change of coordinates we have
\[f = g(x_0, \ldots, x_{i-1}) + p\sum_{0 \leq j \leq k < i} \ell_{jk}(x_i, \ldots, x_n)x_jx_k + p\sum_{0 \leq j < i} q_j(x_i, \ldots, x_n)x_j + p h(x_i, \ldots, x_n)\]
where $\ell_{jk}$, $q_j$, and $h$ are linear, quadratic, and cubic forms in $n+1-i$ variables over $\Z_p$. This situation can be compactly described by taking $S=\{x_0, \ldots, x_{i-1}\}$ and $T=\{x_i, \ldots, x_n\}$ and forming the valuation table of $f$ below in Table \ref{tab:valuations_f}. 
\begin{table}[h]
\centering
    \caption{Valuation table for $f$}
    \label{tab:valuations_f}
    \begin{tabular}{c c c c}
         $=0_i$ & $\geq 1$ & $\geq 1$ & $\geq 1$  
    \end{tabular}
\end{table}

Suppose $[x_0 : \ldots : x_n] \in X_f(\Q_p)$. Throughout, after clearing denominators, we may assume $x_0, \ldots, x_n \in \Z_p$ and share no common factor of $p$. We claim that $p \mid x_0, \ldots, x_{i-1}$. Reducing modulo $p$, we have that $\overline{g}(x_0, \ldots, x_{i-1}) = 0$  is necessary. However, since $\overline{g} = \overline{f}$ has factorization type $i$, the only such solution is $x_0 = \ldots = x_{i-1} = 0 \pmod{p}$. Indeed,
\begin{itemize}
    \item when $i=1$ we have $g(x_0) = ax_0^3$ for some $a \in \Z_p^\times$;
    \item when $i=2$, $\overline{g}$ is an irreducible binary cubic form over $\F_p$ and hence has no nontrivial solutions;
    \item when $i=3$ we have $\overline{g}$ is a ternary cubic form in the ``triangle" configuration and hence has no nontrivial solutions.
\end{itemize}
This motivates another change of coordinates, this time replacing $x_0, \ldots, x_{i-1}$ by $px_0, \ldots, px_{i-1}$ and dividing by $p$,
\begin{align}
    \label{eq:fI} \fI &= \frac{1}{p} f(px_0,\ldots, px_{i-1}, x_i, \ldots, x_n) \\
    \nonumber &= p^2g(x_0, \ldots, x_{i-1}) + p^2\sum_{0 \leq j \leq k < i} \ell_{jk}(x_i, \ldots, x_n)x_jx_k + p\sum_{0 \leq j < i} q_j(x_i, \ldots, x_n)x_j + h_{\mathrm{I}}(x_i, \ldots, x_n),
\end{align}
and we have a bijection between $X_f(\Q_p)$ and $X_{\fI}(\Q_p)$. We pause to record how this change of variables affects the valuations of coefficients of $\fI$ below in Table \ref{tab:valuations_f_I}, opting to lean on this notation as things get more involved.
\begin{table}[h]
\centering
\caption{Valuation table for $\fI$}
\label{tab:valuations_f_I}
\begin{tabular}{r c c c c l}
    & $=2_i$ & $\geq 2$ & $\geq 1$ & $\geq 0$
\end{tabular}
\end{table}

We are now ready to define additional lifting probabilities and relate them to $\sigma_{n,i}$.

\begin{definition}\label{def:taus}
    Let $\tau_{n,ij}$ denote the probability that $f$ has a $p$-adic solution, given that $f$ has type $i$ and after the transformations described above, $\fI$ is primitive and has type $j$.
\end{definition}

\begin{definition}\label{def:sigma_prime}
    Let $\sigma_{n,i}'$ denote the probability that $f$ has a $p$-adic solution, given that $f$ has type $i$ and after the transformations described above, the coefficients of $\fI$ have minimal $p$-adic valuation at least 1.
\end{definition}

\begin{lemma}\label{lem:sigma} Let $n \geq 1$ and $i \in \{1,2,3\}$. If $i = n+1$ we have
\[\sigma_{1,2} = \sigma_{2,3} = 0.\]
For $i < n+1$, $\sigma_{n,i}$ satisfies the relation
   \begin{equation}\label{eq:sigmas}
    \sigma_{n,i} = \left( 1 - \frac{1}{p^{\binom{n-i+3}{3}}}\right)\left(\xi_{n-i,0} + \sum_{1 \leq j \leq 3} \xi_{n-i,j} \tau_{n,ij} \right) + \frac{1}{p^{\binom{n-i+3}{3}}}\sigma_{n,i}'.  
\end{equation} 
\end{lemma}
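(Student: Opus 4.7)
The plan is to condition on the valuation profile of the transformed form $\fI$ defined in \eqref{eq:fI} and to use the bijection $X_f(\Q_p) \leftrightarrow X_{\fI}(\Q_p)$ it affords. First I handle the boundary case $i=n+1$, namely $(n,i) = (1,2)$ and $(2,3)$: the argument preceding \eqref{eq:fI}, which forces $p \mid x_j$ for $0 \leq j < i$ in any primitive $\Q_p$-solution of $X_f$, now forces $p \mid x_j$ for every $0 \leq j \leq n$, contradicting primitivity. Thus $X_f(\Q_p) = \emptyset$ and $\sigma_{1,2} = \sigma_{2,3} = 0$.

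For $i<n+1$, I would partition the coefficients of $\fI$ into the four blocks $S^3$, $S^2T$, $ST^2$, $T^3$. The first three have forced valuations at least $2, 2, 1$ respectively (Table \ref{tab:valuations_f_I}), while the $T^3$ block consists of the coefficients of $h_{\mathrm{I}}(x_i,\ldots,x_n)$, which are distributed uniformly in $\Z_p$ and independently of the other blocks. Since $h_{\mathrm{I}}$ has $\binom{n-i+3}{3}$ coefficients, the event ``$\fI$ is not primitive'' coincides with ``all coefficients of $h_{\mathrm{I}}$ are divisible by $p$'', which has probability $p^{-\binom{n-i+3}{3}}$. On this event, the conditional solvability probability is $\sigma_{n,i}'$ by Definition \ref{def:sigma_prime}, producing the second summand of \eqref{eq:sigmas}.

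On the complementary event (probability $1 - p^{-\binom{n-i+3}{3}}$), $\fI$ is primitive and its reduction $\overline{\fI} = \overline{h_{\mathrm{I}}}$ is a uniformly distributed nonzero cubic form in $x_i,\ldots,x_n$. The factorization type of $\overline{\fI}$ as a form in $n+1$ variables agrees with that of $\overline{h_{\mathrm{I}}}$ as a form in $n+1-i$ variables, since padding the $b$-vector of Definition \ref{def:types} with zeros does not alter the dimension of its $\F_p$-span. Hence $\overline{\fI}$ has factorization type $j$ with probability $\xi_{n-i,j}$. If the type is not 1, 2, or 3, Proposition \ref{prop:lifting_prob_1} forces $X_{\fI}(\Q_p) \neq \emptyset$, contributing $\xi_{n-i,0}$; for type $j \in \{1,2,3\}$, the conditional lifting probability is $\tau_{n,ij}$ by Definition \ref{def:taus}. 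Summing yields the first summand of \eqref{eq:sigmas} and completes the derivation.

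The most delicate point is justifying the independence and uniformity of the $h_{\mathrm{I}}$ coefficients under the conditioning ``$f$ is primitive of type $i$ in the coordinate form used to define $\fI$''. This follows because, in the decomposition of $f$ preceding \eqref{eq:fI}, type $i$ and primitivity constrain only the $S^3$-block (which must reduce to a type-$i$ form $g$), whereas the remaining three blocks are $p$ times a cubic whose $T^3$-part is an independent uniform element of $\Z_p^{\binom{n-i+3}{3}}$; this independence transfers directly to $\fI$ through the rescaling $x_j \mapsto p x_j$ for $j<i$.
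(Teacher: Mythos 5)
Your proposal is correct and follows essentially the same route as the paper's proof: dispose of the $i=n+1$ cases by noting the type-$i$ reduction has no nontrivial $\F_p$-zeros, then condition on whether $h_{\mathrm{I}}$ is primitive (probability $1-p^{-\binom{n-i+3}{3}}$) and stratify the primitive case by the factorization type of $\overline{h_{\mathrm{I}}}$, invoking Proposition \ref{prop:lifting_prob_1} for type $0$ and the definitions of $\tau_{n,ij}$ and $\sigma_{n,i}'$ otherwise. Your closing paragraph on the uniformity and independence of the $h_{\mathrm{I}}$ coefficients makes explicit a point the paper leaves implicit, which is a welcome addition.
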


\begin{proof}
    That $\sigma_{1,2} = \sigma_{2,3} = 0$ is merely the observation that if a binary (resp.\ ternary) cubic form $f$ has type 2 (resp.\ type 3), then its reduction contains no $\F_p$-points, and hence $X_f$ can have no $p$-adic points. 

    Excluding $(n,i) = (1,2), (2,3)$ and given a polynomial $f$ with factorization type $i$, we perform the transformations above to obtain $\fI$ with valuation table given by Table \ref{tab:valuations_f_I}. Note that $\overline{\fI} = \overline{h_{\mathrm{I}}}$, so (provided $\overline{h_{\mathrm{I}}} \neq 0$) if $X_{f}(\Q_p) = X_{\fI}(\Q_p) = \emptyset$, then by Proposition \ref{prop:lifting_prob_1} we have $\overline{h_{\mathrm{I}}}$ has factorization type 1, 2, or 3.
    
    Since $h_{\mathrm{I}}$ is a general cubic form in $n-i+1$ variables, it has $\binom{n-i+3}{3}$ coefficients, hence the probability that $h_{\mathrm{I}}$ is primitive is given by $1-p^{\binom{n-i+3}{3}}$. If $h_{\mathrm{I}}$ is not primitive, then by definition the lifting probability is given by $\sigma_{n,i}'$. If $h_{\mathrm{I}}$ is primitive, then $\xi_{n-i,j}$ represents the probability that $h_{\mathrm{I}}$ has factorization type $j$ (recall $j=0$ means that $h_{\mathrm{I}}$ does not have types 1, 2, or 3). For $j\neq 0$, we again find by definition that $\tau_{n,ij}$ is the probability of a lift.

    In the $j=0$ case, $\overline{h_{\mathrm{I}}}$ does not have type 1, 2, or 3, so it has a nontrivial solution $h_{\mathrm{I}}(x_i,\ldots,x_n) \equiv 0 \pmod{p}$ which lifts to a $p$-adic solution. From this it follows that $\fI(0, \ldots, 0, x_i,\ldots,x_n) \equiv 0 \pmod{p}$ lifts to a $p$-adic point on $X_{\fI}$. Thus in this case the lifting probability is 1. Putting this together with our previous observations yields \eqref{eq:sigmas}.
\end{proof}

\begin{remark}\label{rem:sigma_edge_cases}
   We should point out that $\tau_{n,ij}$ is not well defined when $i+j > n+1$, since the polynomial $h_{\mathrm{I}}$ in $n-i+1$ variables cannot have factorization type $j > n-i+1$. However, in these cases we have $\xi_{n-i,j} = 0$, so \eqref{eq:sigmas} is still well defined. We will therefore sweep this abuse of notation under the rug, both here and in similar future instances.
\end{remark}

It will be useful to define additional probabilities contingent on whether or not $h_{\mathrm{I}}$ satisfies the point, line, or plane condition. 

\begin{definition}
    Let $\sigma_{n,i}^{(k)}$ denote the probability that $f$ has a $p$-adic solution, given that $f$ has type $i$ and after the transformations described above, $\fI$ satisfies condition $(k)$, i.e. $\overline{\fI}(x_i, \ldots, x_{i+k-1}, 0, \ldots, 0)$ has no nontrivial solutions.
\end{definition}
Thus $\sigma_{n,i}^{(k)}$ describes the lifting probability when $f$ has type $i$ and $\fI$ has the valuation table given in Table \ref{tab:valuations_f_I_k} where $h_{\mathrm{I}}$ satisfies condition $(k)$. 
\begin{table}[h]
\centering
\caption{Valuation table for $\fI$ where $\overline{\fI}$ satisfies condition $(k)$}
\label{tab:valuations_f_I_k} 
\begin{tabular}{r c c c c l}
    & $=2_i$ & $\geq 2$ & $\geq 1$ & $= 0^{(k)}$
\end{tabular}
\end{table}

Next we give an analogue of Lemma \ref{lem:sigma}. The proof is entirely analogous --- in fact it is slightly simpler in that $\fI$ is known to be primitive since it satisfies condition $(k)$ --- so we omit it.

\begin{lemma}\label{lem:sigma_cond} Let $n \geq 2$ and $1 \leq i,k \leq 3$. Then $\sigma_{n,i}^{(k)}$ satisfies the relation
\begin{equation}\label{eq:sigma_cond}
    \sigma_{n,i}^{(k)} = \xi_{n-i,0}^{(k)} +  \sum_{1 \leq j \leq 3} \xi_{n-i,j}^{(k)} \tau_{n,ij}.
\end{equation}
\end{lemma}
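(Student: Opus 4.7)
The plan is to mirror the proof of Lemma \ref{lem:sigma}, with the simplifying observation that condition $(k)$ forces $h_{\mathrm{I}}$ to be primitive, so the non-primitive branch (which in Lemma \ref{lem:sigma} contributed the $p^{-\binom{n-i+3}{3}} \sigma_{n,i}'$ term) disappears.

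First, I would start from a primitive cubic form $f \in \Z_p[x_0,\ldots,x_n]$ of factorization type $i$ for which $\fI$ (defined via \eqref{eq:fI} after the same $\F_p$-linear change of coordinates) has the valuation table in Table \ref{tab:valuations_f_I_k} with $\overline{h_{\mathrm{I}}}$ satisfying condition $(k)$. Since the coordinate change and the substitution $x_j \mapsto p x_j$ for $j < i$ yield a bijection $X_f(\Q_p) \leftrightarrow X_{\fI}(\Q_p)$, it suffices to compute the probability that $X_{\fI}(\Q_p) \neq \emptyset$ under these hypotheses.

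Next, I would note that $\overline{\fI} = \overline{h_{\mathrm{I}}}$ because every other term of $\fI$ is divisible by $p$. By hypothesis, $\overline{h_{\mathrm{I}}}(x_i,\ldots,x_{i+k-1},0,\ldots,0)$ has only the trivial zero over $\F_p$, so in particular $\overline{h_{\mathrm{I}}} \not\equiv 0$, hence $h_{\mathrm{I}}$ is already primitive; there is no ``non-primitive'' contribution to account for. Decomposing by the factorization type of $\overline{h_{\mathrm{I}}}$, which by definition of the conditional factorization probabilities takes value $j \in \{0,1,2,3\}$ with probability $\xi_{n-i,j}^{(k)}$, I would then combine the following two observations:

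When $j = 0$, Proposition \ref{prop:lifting_prob_1} applied to $h_{\mathrm{I}}$ (a cubic form in $n-i+1$ variables) yields $X_{h_{\mathrm{I}}}(\Q_p) \neq \emptyset$, and any such point, padded with $x_0=\cdots=x_{i-1}=0$, lies on $X_{\fI}$, so the lifting probability is $1$. When $j \in \{1,2,3\}$, the lifting probability is exactly $\tau_{n,ij}$ by Definition \ref{def:taus}. Summing these contributions weighted by $\xi_{n-i,j}^{(k)}$ produces the claimed identity \eqref{eq:sigma_cond}. There is no real obstacle here beyond this bookkeeping; the only subtlety worth flagging is the (legitimate) abuse of notation that $\tau_{n,ij}$ is irrelevant when $i+j > n+1$, since then $\xi_{n-i,j}^{(k)} = 0$ and the corresponding term drops out, as already noted in Remark \ref{rem:sigma_edge_cases}.
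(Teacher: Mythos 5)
Your argument is correct and is exactly the one the paper intends: the paper omits this proof, remarking only that it is analogous to Lemma \ref{lem:sigma} and simpler because condition $(k)$ guarantees $\fI$ (equivalently $h_{\mathrm{I}}$) is primitive, which is precisely the observation you make to drop the $\sigma_{n,i}'$ branch. The remaining bookkeeping by factorization type, with Proposition \ref{prop:lifting_prob_1} handling $j=0$ and Definition \ref{def:taus} handling $j=1,2,3$, matches the paper's approach.
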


We conclude this phase by computing a relation for $\sigma_{n,i}'$. This offers a first glimpse into the interplay between the various lifting probabilities we have defined so far, and a glimmer of hope that in continuing on to phases II and III, we will actually accumulate enough relations to solve for $\rho_n$.

\begin{lemma}\label{lem:sigma_prime}
Let $n \geq 2$ and $1 \leq i \leq 3$ and $(n,i) \neq (2,3)$. Then $\sigma_{n,i}'$ satisfies the relation
\begin{equation}\label{eq:sigma_prime}
    \sigma_{n,i}' = 1 - \frac{1}{p^{i\binom{n-i+2}{2}}} + \frac{1}{p^{i\binom{n-i+2}{2}}} \left( \left(1 - \frac{1}{p^{\binom{n-i+3}{3}}}\right) \left( \xi_{n-i,0} +  \sum_{1 \leq j \leq 3} \xi_{n-i,j}\sigma_{n,j}^{(i)} \right) + \frac{1}{p^{\binom{n-i+3}{3}}} \rho_n^{(i)} \right).
\end{equation}
\end{lemma}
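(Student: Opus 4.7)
The plan is to continue the conditioning of $\fI$, splitting further on the valuations of successive coefficient blocks. First, from Table~\ref{tab:valuations_f_I} one sees that the condition $v_p(\fI) \geq 1$ defining $\sigma_{n,i}'$ is equivalent to $v_p(h_{\mathrm{I}}) \geq 1$, since the $s = 3, 2, 1$ blocks of $\fI$ already have valuation at least $1$. Writing $h_{\mathrm{I}} = p h_{\mathrm{II}}$ and $\fII := \fI/p$ gives
\[
\fII = p\,g(x_0, \ldots, x_{i-1}) + p\textstyle\sum_{j,k} \ell_{jk}\, x_j x_k + \sum_j q_j(x_i,\ldots,x_n)\, x_j + h_{\mathrm{II}}(x_i,\ldots,x_n),
\]
with $\overline g$ of type $i$; moreover $X_f(\Q_p) = X_{\fII}(\Q_p)$, and $\ell_{jk}, q_j, h_{\mathrm{II}}$ are independent and uniform over the relevant $\Z_p$-modules.

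Next I would condition on whether the $i\binom{n-i+2}{2}$ coefficients of the quadratic forms $\{q_j\}_{j \in S}$ are all divisible by $p$. With probability $1 - p^{-i\binom{n-i+2}{2}}$ some $\overline{q_{j_0}}$ is a nonzero quadratic form over $\F_p$, and any such form takes a nonzero value at some $(c_i, \ldots, c_n) \in \F_p^{n-i+1}$ (inspect the first nonzero monomial). Then $\overline{\fII}(x_0, \ldots, x_{i-1}, c_i, \ldots, c_n) = 0$ is linear in $x_0, \ldots, x_{i-1}$ with nonzero coefficient of $x_{j_0}$; any solution is a smooth $\F_p$-point of $X_{\fII}$ because $\partial_{x_{j_0}}\overline{\fII} = \overline{q_{j_0}}(c) \neq 0$, so Hensel's lemma lifts it. This yields the leading $1 - p^{-i\binom{n-i+2}{2}}$ term of $\sigma_{n,i}'$.

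In the complementary case all $q_j \equiv 0 \pmod p$, giving $\overline{\fII} = \overline{h_{\mathrm{II}}}$. If $h_{\mathrm{II}}$ is non-primitive (probability $p^{-\binom{n-i+3}{3}}$) then every coefficient of $\fII$ is divisible by $p$, so $\fIII := \fII/p$ is a primitive cubic whose $s = 3$ block reduces to $\overline g$; hence $\overline{\fIII}$ satisfies condition $(i)$ and the contribution is $\rho_n^{(i)}$. Otherwise $h_{\mathrm{II}}$ is primitive (probability $1 - p^{-\binom{n-i+3}{3}}$), and we split by the factorization type of $\overline{h_{\mathrm{II}}}$. Type $0$ (probability $\xi_{n-i,0}$) contributes $1$ via Proposition~\ref{prop:lifting_prob_1} applied to $h_{\mathrm{II}}$ in $n-i+1$ variables: any resulting $\Q_p$-zero $(y_i, \ldots, y_n)$ gives $(0,\ldots,0,y_i,\ldots,y_n) \in X_{\fII}(\Q_p)$, because setting the $S$-coordinates to zero kills the $p g$, $p \sum \ell_{jk} x_j x_k$, and $\sum q_j x_j$ terms. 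Type $j \in \{1,2,3\}$ (probability $\xi_{n-i,j}$) is claimed to contribute $\sigma_{n,j}^{(i)}$, giving the inner sum of the formula.

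The main technical obstacle is establishing this last claim. Since $\overline{\fII} = \overline{h_{\mathrm{II}}}$ has factorization type $j$ (its linear-factor coefficients span the same $j$-dimensional $\F_p$-subspace as those of $\overline{h_{\mathrm{II}}}$), the form $\fII$ is itself a primitive cubic of type $j$. I would verify that, after an appropriate $\Z_p$-linear change of coordinates within the $T$-block, applying phase I to $\fII$ (scaling the $j$ new variables supporting the type and dividing by $p$) produces $(\fII)_{\mathrm{I}}$ with valuation profile matching Table~\ref{tab:valuations_f_I}, whose restriction to $x_0, \ldots, x_{i-1}$ with the other variables zeroed reduces to $\overline g$ — a type-$i$ form with only trivial $\F_p$-zeros, which is precisely condition $(i)$ after the necessary permutation of variables. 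The remaining bookkeeping is to check that the induced joint distribution on the coefficients of $(\fII)_{\mathrm{I}}$ agrees with that underlying $\sigma_{n,j}^{(i)}$, which follows from the measure-preservation of the phase-I coordinate change on the appropriate Haar measures. Assembling all the branches above in order recovers the stated identity.
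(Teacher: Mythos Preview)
Your proposal is correct and follows essentially the same approach as the paper. You divide $\fI$ by $p$, condition first on whether the quadratic forms $q_j$ all vanish modulo $p$ (finding a Hensel-liftable point via linearity in some $x_{j_0}$ when they do not), then on the primitivity and factorization type of $h_{\mathrm{I}}/p$, identifying the resulting lifting probabilities as $1$, $\rho_n^{(i)}$, or $\sigma_{n,j}^{(i)}$ via the appropriate reindexing; this is exactly the paper's argument, with your treatment of the Hensel step being slightly more explicit about the nonvanishing partial derivative.
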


\begin{proof}
    The starting point for $\sigma_{n,i}'$ is a cubic form $f$ from which $\fI$ is produced via the phase I transformations \eqref{eq:fI}. If $h_{\mathrm{I}}$ is not primitive, then the valuation table of $\frac{1}{p} \fI$ is given below.
\begin{table}[h]
\centering
\caption{Valuation table for $\frac1p \fI$}
\label{tab:valuations_f_I/p}
\begin{tabular}{r c c c c l}
    & $=1_i$ & $\geq 1$ & $\geq 0$ & $\geq 0$
\end{tabular}
\end{table}

    Reducing modulo $p$, we have
    \[\overline{\frac1p \fI} = \sum_{0 \leq j < i} \overline{q_j}(x_i, \ldots, x_n)x_j + \overline{\frac1p h_{\mathrm{I}}}(x_i, \ldots, x_n).\]
    Suppose for some $0 \leq j < i$ the quadratic form $q_j$ is nonvanishing mod $p$. Then there exists some (nontrivial) input $(x_i, \ldots, x_n)$ for which $q_j(x_i, \ldots, x_n) \not\equiv 0 \pmod{p}$. Then $\overline{\frac1p \fI}$ is linear in $x_j$, so it has a solution which lifts to a $p$-adic solution, producing a point in $X_f(\Q_p)$. 

    Suppose instead that all $q_j$ fail to be primitive. Counting coefficients of each $q_j$, this happens with probability $1/p^{i\binom{n-i+2}{2}}$. Now we turn our attention to the factorization type of $h_{\mathrm{I}}/p$. As a cubic form in $n-i+3$ variables, it fails to be primitive with probability $1/p^{\binom{n-i+3}{3}}$, in which case the valuation table for $\fI/p$ is given by Table \ref{tab:valuations_f_I/p_v2} below.
\begin{table}[h]
\centering
\caption{Valuation table for $\frac1p \fI$}
\label{tab:valuations_f_I/p_v2}
\begin{tabular}{r c c c c l}
    & $=1_i$ & $\geq 1$ & $\geq 1$ & $\geq 1$
\end{tabular}
\end{table}

    Here we can divide again by $p$, returning us to the case of a general cubic form in $n+1$ variables satisfying \textit{condition} $(i)$, since $\overline{\frac1{p^2}\fI}(x_0, \ldots, x_{i-1}, 0, \ldots, 0)$ has no solutions. Thus in the case that $h_{\mathrm{I}}/p$ fails to be primitive, the lifting probability is given by $\rho_n^{(i)}$.

    If $h_{\mathrm{I}}/p$ is primitive (which occurs with probability $1 -1/p^{\binom{n-i+3}{3}}$, then its reduction has factorization type $j=1,2,3$ with probability $\xi_{n-i,j}$, and none of those with probability $\xi_{n-i,0}$. In the latter case, a liftable solution exists by the same argument as in Lemma \ref{lem:sigma} using Proposition \ref{prop:lifting_prob_1}.

    In the former case, we claim the probability of a liftable solution is given by $\sigma_{n,j}^{(i)}$. To realize this, we first carry out a change of coordinates in $x_i, \ldots, x_n$ isolating the first $j$ variables of $h_{\mathrm{I}}/p$. Repartitioning the set of variables into $S = \{x_0, \ldots, x_{i-1}\}$, $T = \{x_{i}, \ldots, x_{j-1}\}$, and $U = \{x_j, \ldots, x_n\}$, we obtain the left side of the valuation table below suggesting a natural reindexing of the variables to form the table on the right side.
    \begin{table}[h]
\centering
\caption{Valuation tables for $\frac1p \fI$ after isolating $x_i, \ldots, x_{i+j-1}$ in $h_{\mathrm{I}}/p$ and after renumbering}
\label{tab:valuations_f_I/p_v3}
\begin{tabular}{r c c c c l c r c c c c l}
    $S$ & & & & & $T$ & & $T$ & & & & & $S \cup U$\\
    & $=1_i$ & $\geq 1$ & $\geq 1$ & $=0_j$ & & & & $=0_j$ & $\geq 1$ & $\geq 1$ & $\geq 1^{(i)}$\\
    & $\geq 1$ & $\geq 1$ & $\geq 1$ & & & $\xrightarrow{\text{reindex}}$ \\
    & $\geq 1$ & $\geq 1$ \\
    & $\geq 1$ \\
    $U$ 
\end{tabular}
\end{table}

From Table \ref{tab:valuations_f_I/p_v3} we see that after reindexing and performing the usual phase I transformations \eqref{eq:fI} on the resulting cubic form, we land precisely in the situation of $\sigma_{n,j}^{(i)}$. Putting together these observations yields \eqref{eq:sigma_prime}, completing the proof of the lemma.
\end{proof}

\begin{example}[binary cubic forms]
    We can illustrate the approach of phase I by computing $\rho_1$, the density of $p$-adic binary cubic forms with at least one root, which can be deduced from the existing literature; see e.g.\ \cite[\S 1.2.3]{BCFG}.

    We have $\xi_{1,3} = \xival[1]{1}{3} = 0$ by Lemmas \ref{lem:xi_no_cond} and \ref{lem:xi_cond} and $\sigma_{1,2} = 0$ by Lemma \ref{lem:sigma}. Thus the equations
    \eqref{eq:rho_structure_eq} for $\rho_n$ and \eqref{eq:rho_(j)} for $\rho_n^{(1)}$ specialize to
    \begin{alignat*}{2}
        \rho_1 &= \xi_{1,0} + \xi_{1,1}\sigma_{1,1} &&= \frac{2p(p+1)}{3(p^2 + 1)} + \frac1{p^2 + 1}\sigma_{1,1} \\
        \rho_1^{(1)} &= \xival[1]{1}{0} + \xival[1]{1}{1}\sigma_{1,1} &&= \frac{2(p^2 - 1)}{3p^2} + \frac1{p^2}\sigma_{1,1}.
    \end{alignat*}
    By Lemmas \ref{lem:sigma}, \ref{lem:sigma_cond}, \ref{lem:sigma_prime}, specializing \eqref{eq:sigmas}, \eqref{eq:sigma_cond}, \eqref{eq:sigma_prime} to $n=1$, $i=1$ reveals
    \begin{alignat*}{2}
        \sigma_{1,1} &= \left(1 -\frac1p\right)\tau_{1,11} + \frac1p \sigma_{1,1}' &&= \frac1p \sigma_{1,1}'\\
        \sigma_{1,1}^{(1)} &= \tau_{1, 11} &&= 0 \\
        \sigma_{1,1}' &= 1 - \frac1p + \frac1p\left( \left(1 - \frac1p\right)\sigma_{1,1}^{(1)} + \frac1p \rho_1^{(1)}\right) &&= 1 - \frac1p + \frac1{p^2} \rho_1^{(1)},
    \end{alignat*}
    with the right hand equality following once we observe that $\tau_{1, 11} = 0$. 
   
    We can see this directly by unwinding Definition \ref{def:taus} as follows. Suppose $f$ is a binary form with type 1, so after a change of variables we may assume its valuation table is given by Table \ref{tab:valuations_f} with $S = \{x_0\}$ and $T = \{x_1\}$. Any primitive solution $[x_0 : x_1]$ therefore has $p \mid x_0$. After the transformation \eqref{eq:fI}, we see $\fI$ has valuation table given below, and thus $p \mid x_1$.
    \begin{table}[h]
    \centering
    \label{tab:example_n=1}
    \caption{Valuation table for $\fI$ when $n=1$ and $i=1$}
    \begin{tabular}{r c c c c l}
    & $=2_1$ & $\geq 2$ & $\geq 1$ & $= 0_1$
    \end{tabular}
    \end{table}
    This contradicts the original $[x_0 : x_1]$ being primitive, so $\tau_{1, 11}=0$.    Solving the relations above among $\rho_1$, $\rho_1^{(1)}$, $\sigma_{1,1}$, and $\sigma_{1,1}'$, we obtain
    \begin{equation}\label{eq:rho1}
        \rho_1 = \frac{2p^4 + 3p^3 + p^2 + 3p + 2}{3(p^4 + p^3 + p^2 + p + 1)} = 1 - \frac{(p^2 + 1)^2}{3(p^4 + p^3 + p^2 + p + 1)}.
    \end{equation} 
    
\end{example}

\subsection{Phase II}

The goal of phase II is to compute $\tau_{n,ij}$: the lifting probability given that $f$ has type $i$ and the $\fI$ produced in phase I has type $j$. Here $i,j \in \{1,2,3\}$ and $i+j \leq n+1$ (see Remark \ref{rem:sigma_edge_cases}). Our approach is essentially the same as in phase I, in that we first isolate the first $j$ variables of $h_{\mathrm{I}}$, then produce $\fII$ such that $X_{\fII}(\Q_p), X_{\fI}(\Q_p), X_f(\Q_p)$ are in bijection, and analyze the reduction $\overline{\fII}$.

Recall from phase I that $\overline{\fI} = \overline{h_{\mathrm{I}}}$ for $h_{\mathrm{I}}(x_i, \ldots, x_n)$ with factorization type $j$. After an invertible linear change of coordinates in \textit{only} $x_i, \ldots, x_n$, we may assume $\fI$ has the valuation table below for the partition $S = \{x_0, \ldots, x_{i-1}\}$, $T=\{x_i, \ldots, x_{i+j-1}\}$, and $U = \{x_{i+j}, \ldots, x_n\}$ (note that $U = \emptyset$ if $i+j = n+1$). 

\begin{table}[h]
    \centering
    \caption{Valuation table for $\fI$}
    \label{tab:val_table_fI_after_iso}
    \begin{tabular}{c c c c}
         $=2_i$ & $\geq 2$ & $\geq 1$ & $=0_j$\\
         $\geq 2$ & $\geq 1$ & $\geq 1$ & \\
         $\geq 1$ & $\geq 1$ & &\\
         $\geq 1$ &&&\\
    \end{tabular}
\end{table}

Note that $[x_0 : \ldots : x_n] \in X_{\fI}(\Q_p)$  implies $p \mid x_i, \ldots, x_{i+j-1}$. This leads us to define
\[\fII = \frac1p \fI(x_0, \ldots, x_{i-1}, px_i, \ldots, px_{i+j-1}, x_{i+j}, \ldots, x_n)\]
with valuation table below such that $X_{\fII}(\Q_p)$ and $X_{\fI}(\Q_p)$ are in bijection.

\begin{table}[h]
    \centering
    \caption{Valuation table for $\fII$}
    \label{tab:val_table_fII}
    \begin{tabular}{c c c c}
         $=1_i$ & $\geq 2$ & $\geq 2$ & $=2_j$\\
         $\geq 1$ & $\geq 1$ & $\geq 2$ & \\
         $\geq 0$ & $\geq 1$ & &\\
         $\geq 0$ &&&\\
    \end{tabular}
\end{table}

Note the $SU^2$ entry in Table \ref{tab:val_table_fII} above. If these coefficients are nonvanishing modulo $p$, then $\overline{\fII}$ is \textit{linear} in at least one of the variables $x_0, \ldots, x_{i-1}$, so an $\F_p$-solution can be found and lifted to a $\Q_p$-point on $X_{\fII}$. If the $SU^2$ entry has valuation at least 1, then we must analyze $\overline{\fII} = h_{\mathrm{II}}(x_{i+j}, \ldots, x_n)$.

\begin{definition}
    Let $\theta_{n,ijk}$ denote the probability that $f$ has a $p$-adic solution given that
    \begin{itemize}
        \item $f$ has type $i$, 
        \item $\fI$ has type $j$, 
        \item the $SU^2$ coefficients of $\fII$ have minimal valuation at least 1, and
        \item $\fII$ is primitive and reduces to type $k$.
    \end{itemize}
\end{definition}

\begin{definition}
    Let $\tau_{n,ij}'$ denote the probability that $f$ has a $p$-adic solution given that $f$ has type $i$, $\fI$ has type $j$, and the resulting $\overline{\fII}$ vanishes modulo $p$.
\end{definition}

\begin{lemma}\label{lem:tau}
    Let $n \geq 1$ and $i,j \in \{1,2,3\}$. If $i+j = n+1$ then
    \[\tau_{n,ij} = 0.\]
    If $i+j < n+1$ then
    \begin{equation}\label{eq:tau}
        \tau_{n,ij} = \scalebox{0.85}{$\displaystyle\left(1-\frac{1}{p^{i\binom{n-i-j+2}{2}}}\right) + \frac{1}{p^{i\binom{n-i-j+2}{2}}}\left( \left(1- \frac{1}{p^{\binom{n-i-j+3}{3}}}\right)\left(\xi_{n-i-j,0} + \sum_{1 \leq k \leq 3} \xi_{n-i-j,k} \theta_{n,ijk} \right) + \frac{1}{p^{\binom{n-i-j+3}{3}}} \tau_{n,ij}' \right)$}.
    \end{equation}
\end{lemma}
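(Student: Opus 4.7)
The plan is to split the proof according to whether $i+j=n+1$ or $i+j<n+1$.

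\textbf{Boundary case $i+j=n+1$.} When $U=\emptyset$, Table~\ref{tab:val_table_fII} retains only the entries $S^3\,(=1_i)$, $S^2T\,(\geq 2)$, $ST^2\,(\geq 2)$, $T^3\,(=2_j)$, so the minimum coefficient valuation in $\fII$ is $1$, achieved only in $S^3$. For any integral primitive $x=(x_0,\ldots,x_n)$ with $\fII(x)=0$, dividing by $p$ and reducing modulo $p$ isolates the $S^3$ contribution, giving $\overline{h_S/p}(x_0,\ldots,x_{i-1})\equiv 0$; the type~$i$ hypothesis then forces $p\mid x_0,\ldots,x_{i-1}$. Substituting $x_k=py_k$ for $k<i$ multiplies the $S^3$, $S^2T$, $ST^2$ blocks by $p^3, p^2, p$ respectively, leaving $T^3$ as the unique block contributing at valuation exactly $2$; reducing $\fII(py,x)/p^2$ modulo $p$ yields $\overline{h_T/p^2}(x_i,\ldots,x_n)\equiv 0$ with type~$j$, forcing $p\mid x_i,\ldots,x_{i+j-1}=x_n$. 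This contradicts primitivity, so $X_{\fII}(\Q_p)=X_f(\Q_p)=\emptyset$ and $\tau_{n,ij}=0$.

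\textbf{Interior case $i+j<n+1$.} Now $U$ is nonempty and the $SU^2$ and $U^3$ blocks of $\fII$ carry no valuation constraint, so their coefficients are independent uniform modulo $p$. I would condition first on whether the $SU^2$ block is primitive, and if not, on the factorization behavior of the $U^3$ reduction $h_{\mathrm{II}}$. The $SU^2$ block contains $i\binom{n-i-j+2}{2}$ coefficients (one per triple $(x_m,x_{k_1},x_{k_2})$ with $m<i$ and $i+j\leq k_1\leq k_2\leq n$) and so is imprimitive with probability $p^{-i\binom{n-i-j+2}{2}}$. When it is primitive, inspection of the valuation table gives $\overline{\fII}=\sum_{m<i}x_m Q_m(x_{i+j},\ldots,x_n)+C(x_{i+j},\ldots,x_n)$ for some nonzero quadratic $Q_m$; choosing $u\in\F_p^{n-i-j+1}$ with $Q_m(u)\neq 0$ (possible since a nonzero homogeneous quadratic over $\F_p$ does not vanish on all of $\F_p^{n-i-j+1}$) and solving $\sum_m x_m Q_m(u)+C(u)=0$ for $x_m$ produces a smooth $\F_p$-point of $\overline{X_{\fII}}$ that lifts via Hensel's lemma, giving lifting probability $1$.

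When the $SU^2$ block is imprimitive, $\overline{\fII}$ reduces to the cubic $\overline{h_{\mathrm{II}}}(x_{i+j},\ldots,x_n)$ in $n-i-j+1$ variables, whose $\binom{n-i-j+3}{3}$ coefficients are independent uniform modulo $p$. With probability $p^{-\binom{n-i-j+3}{3}}$ this reduction is itself imprimitive, giving lifting probability $\tau_{n,ij}'$ by definition. Otherwise, Lemma~\ref{lem:xi_no_cond} stratifies by factorization type: type $k\in\{1,2,3\}$ with probability $\xi_{n-i-j,k}$ gives lifting probability $\theta_{n,ijk}$, while no such type (probability $\xi_{n-i-j,0}$) admits a liftable $\F_p$-point by Proposition~\ref{prop:lifting_prob_1}, giving lifting probability $1$. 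Assembling these conditional probabilities yields \eqref{eq:tau}. The key technical subtlety is the Hensel lifting step in the primitive $SU^2$ case, which requires the short argument that a nonzero homogeneous quadratic form over $\F_p$ cannot vanish identically on $\F_p^k$ for $k\geq 1$ --- most easily verified by evaluating on standard basis vectors and their pairwise sums.
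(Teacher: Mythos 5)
Your proof is correct and follows essentially the same route as the paper: the boundary case $i+j=n+1$ is the same divisibility contradiction (you just rederive the phase-I conclusion directly from the valuation table), and the interior case uses the identical stratification by primitivity of the $SU^2$ block, then by vanishing versus factorization type of $\overline{h_{\mathrm{II}}}$, with the same Hensel argument in the linear-in-$x_m$ case. The extra detail you supply (the explicit nonvanishing of a nonzero quadratic form over $\F_p$, verified on basis vectors and their pairwise sums) is exactly what the paper implicitly relies on via its reference to the argument in Lemma \ref{lem:sigma_prime}.
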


\begin{proof}
    Suppose $i+j = n+1$, i.e. that $U$ is empty. We know from phase I that if $[x_0:\ldots:x_n] \in X_f(\Q_p)$ then $p \mid x_0, \ldots, x_{i-1}$. The reduction $\overline{\fII}$ has type $j$ in exactly $j$ variables, and hence $p \mid x_i, \ldots, x_n$ as well, a contradiction. Assume moving forward that $i+j < n+1$

    The probability that the coefficients of monomials in $SU^2$ are all divisible by $p$ is equal to that of $i$ independent \textit{quadratic} forms in $n-i-j$ variables vanishing modulo $p$, which is $1/p^{i\binom{n-i-j+2}{2}}$. If any of the $SU^2$ coefficients are $p$-adic units, then $\overline{\fII}$ is linear in $x_m$ for some $0 \leq m < i$, and there exists a point in $\overline{X_{\fII}}(\F_p)$ which lifts to $X_{\fII}(\Q_p)$ by Hensel's Lemma (this is the same argument used in Lemma \ref{lem:sigma_prime} with the $ST^2$ coefficients).

    If all the coefficients of $SU^2$ monomials are divisible by $p$, then the probability that $\overline{\fII}$ vanishes altogether is that of a cubic form in $n-i-j$ variables vanishing, $1/p^{\binom{n-i-j+3}{3}}$. In this case the lifting probability is $\tau_{n,ij}'$ by definition.

    Assuming $\overline{\fII} \neq 0$, we stratify by reduction type. For $k=1,2,3$, type $k$ occurs with probability $\xi_{n-i-j,k}$ and the associated lifting probability is $\theta_{n,ijk}$ by definition. If $\overline{\fII}$ has none of these types, which occurs with probability $\xi_{n-i-j,0}$ then $X_{\fII}(\Q_p) \neq \emptyset$ by Proposition \ref{prop:lifting_prob_1}.
\end{proof}

\begin{lemma}\label{lem:tau_prime}
    Let $n \geq 2$ and $i,j \in \{1,2,3\}$ such that $i+j < n+1$. Then we have
    \begin{equation}\label{eq:tau_prime}
        \tau_{n,ij}' = 1 - \frac{1}{p^{ij(n-i-j+1) + j\binom{n-i-j+2}{2}}} + \frac{1}{p^{ij(n-i-j+1) + j\binom{n-i-j+2}{2}}}\left(\xi_{n-j,0}^{(i)} + \sum_{0 \leq k \leq 3} \xi_{n-j,k}^{(i)} \sigma_{n,k}^{(j)} \right)
\end{equation}
\end{lemma}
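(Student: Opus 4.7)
The plan is to analyze $\fII$ under the conditions defining $\tau_{n,ij}'$ --- namely, besides the valuation table in Table \ref{tab:val_table_fII}, we additionally have $SU^2 \geq 1$ in $\fII$ (from the preceding branch of Lemma \ref{lem:tau}) and $U^3 \geq 1$ in $\fII$ (from $\overline{\fII}$ vanishing) --- and then examine the reduction of $\fII/p$. The resulting valuation table of $\fII/p$ has $S^3 = 0_i$, $T^3 = 1_j$, the blocks $S^2T, ST^2, T^2U$ with valuation $\geq 1$ (so vanishing mod $p$), and $S^2U, STU, SU^2, TU^2, U^3$ with valuation $\geq 0$. Consequently, in $\overline{\fII/p}$ the $T$-variables appear only through the $STU$ and $TU^2$ monomials --- linearly in each $x_t$ --- so we may write
\[\overline{\fII/p}(x) = \overline{g_S}(x_S) + \overline{C}(x_S, x_U) + \sum_{t \in T} x_t\,\overline{L_t}(x_S, x_U),\]
where $\overline{g_S}$ is the type-$i$ cubic in $S$, $\overline{C}$ is a cubic in $S, U$ only, and each $\overline{L_t}$ is a quadratic form in $x_S, x_U$ built from the $STU$ and $TU^2$ coefficients that involve $t$. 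The $j$ forms $\overline{L_t}$ collectively involve $j[i(n-i-j+1) + \binom{n-i-j+2}{2}] = M$ coefficients, matching the exponent in the claimed formula.

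If at least one $\overline{L_t}$ is nonzero as a polynomial, then it takes a nonzero value at some $\F_p$-point --- for $p=2$ this uses that a homogeneous quadratic form which vanishes identically as a function on $\F_2^m$ must have all coefficients equal to zero. Pick $(x_S^0, x_U^0)$ with $\overline{L_t}(x_S^0, x_U^0) \neq 0$ and set $x_{t'} = 0$ for $t' \neq t$; the equation $\overline{\fII/p} = 0$ is then linear in $x_t$ with nonzero partial derivative $\overline{L_t}$, producing a smooth $\F_p$-point on $X_{\fII/p}$ that Hensel-lifts to a point in $X_f(\Q_p)$. This occurs with probability $1 - 1/p^M$ and accounts for the first summand of the formula.

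With the remaining probability $1/p^M$, all $\overline{L_t}$ vanish, so $\overline{\fII/p}$ is a cubic form in the $n-j+1$ variables of $S \cup U$ whose restriction to $S$ is the type-$i$ form $\overline{g_S}$, so it satisfies condition $(i)$. Stratifying by factorization type: with probability $\xi_{n-j,0}^{(i)}$ it has none of types $1, 2, 3$, and Proposition \ref{prop:lifting_prob_1}, applied to $\fII/p$ viewed as an $(n+1)$-variable cubic whose reduction also fails these types, gives a $\Q_p$-point (lifting probability $1$). With probability $\xi_{n-j, k}^{(i)}$ for $k \in \{1, 2, 3\}$, the reduction has type $k$; I claim the lifting probability is then $\sigma_{n, k}^{(j)}$. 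To see this, re-coordinate within $S \cup U$ so that the $k$-dimensional support of the type-$k$ form occupies the first $k$ positions, embedding $S$ in the first $i$ of them --- condition $(i)$ forces the $S$-coefficients of the associated linear form to contribute exactly $i$ dimensions --- then apply phase I to $\fII/p$. The new valuation table matches Table \ref{tab:valuations_f_I_k}, and the original $T$-block, untouched by phase I, supplies condition $(j)$ because the $T^3$ coefficients, unchanged by the substitution, reduce to a type-$j$ cubic.

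The main obstacle is this final identification: the conditional distribution of $\fII/p$, given that its reduction has type $k$ and satisfies condition $(i)$, must yield the same lifting probability as the distribution defining $\sigma_{n, k}^{(j)}$. This rests on the Haar-invariance of the linear changes of coordinates and $p$-scalings deployed in phases I and II, together with the fact that $\sigma_{n, k}^{(j)}$ is determined solely by the valuation and type structure recorded in Table \ref{tab:valuations_f_I_k} and condition $(j)$, not by further structural features (such as condition $(i)$) on the type-$k$ reduction. Summing the contributions from both cases yields the claimed formula for $\tau_{n,ij}'$.
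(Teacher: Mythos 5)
Your proof is correct and follows essentially the same route as the paper: divide $\fII$ by $p$, observe that a unit coefficient in the $STU$ or $TU^2$ blocks yields a point at which the form is linear in a $T$-variable with unit partial derivative (hence Hensel-liftable), and otherwise repartition into $(S\cup U)\coprod T$, stratify by the type $k$ of the resulting condition-$(i)$ form, and identify the remaining lifting probability with $\sigma_{n,k}^{(j)}$ via a phase I transformation. Your only departures are cosmetic: you merge the paper's two separate Hensel sub-cases (a unit $TU^2$ coefficient versus a unit $STU$ coefficient) into a single argument about the quadratic forms $\overline{L_t}$, and you flag explicitly the measure-theoretic identification that the paper leaves implicit.
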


\begin{proof}
    We begin by dividing through by $p$, producing the valuation table below.

    \begin{table}[h]
    \centering
    \caption{Valuation table for initial step in computing $\tau_{n,ij}'$ (colors added for emphasis)}
    \label{tab:val_table_tau_prime_initial}
    \begin{tabular}{c c c c}
         $=0_i$ & $\geq 1$ & $\geq 1$ & $=1_j$\\
         $\geq 0$ & {\color{blue}$\geq 0$} & $\geq 1$ & \\
         $\geq 0$ & {\color{red}$\geq 0$} & &\\
         $\geq 0$ &&&\\
    \end{tabular}
\end{table}

    Looking at the center left column of Table \ref{tab:val_table_tau_prime_initial}, we argue that if the coefficients of monomials in $STU$ (shown in blue) or $TU^2$ (shown in red) are units in $\Z_p$, then we can find a lift. Let us see why explicitly.

    Suppose one of the $TU^2$ coefficients is nonzero and momentarily specialize $x_0 = \ldots = x_{i-1} = 0$. The resulting cubic form is linear in one of the variables in $T$, so there exists a Hensel-liftable solution to a point in $X_{\fII}(\Q_p)$ (we saw this previously in Lemmas \ref{lem:sigma_prime} and \ref{lem:tau}).

    Suppose instead that all $TU^2$ coefficients are divisible by $p$, but some $STU$ coefficient is a $p$-adic unit; for concreteness, say $a_{0in} \in \Z_p^\times$. Again we find $\overline{\frac1p \fII}$ is linear in $x_i$, so there exists an $\F_p$ point with, say $x_0 = 1,\ x_n = 1$, which can be lifted via Hensel's lemma in $x_i$ to a point in $X_{\fII}(\Q_p)$.

    Thus we have found a lift unless all $STU, TU^2$ coefficients vanish, which occurs with the same probability as $ij$ linear forms in $n-i-j$ variables and $j$ quadratic forms in $n-i-j$ variables simultaneously vanishing modulo $p$, $1/p^{ij(n-i-j+1) + j\binom{n-i-j+2}{2}}$. If this occurs, we repartition our set of variables into $(S \cup U) \coprod T$, as shown in Table \ref{tab:val_table_tau_prime_repartition}.

     \begin{table}[h]
    \centering
    \caption{Repartitioning step in computing $\tau_{n,ij}'$}
    \label{tab:val_table_tau_prime_repartition}
    \begin{tabular}{r c c c c l c r c c c c l}
        $S$ & &&&& $T$ && $S \cup U$ &&&&& $T$\\
          & $=0_i$ & $\geq 1$ & $\geq 1$ & $=1_j$ & & &  & $=0^{(i)}$ & $\geq 1$ & $\geq 1$ & $= 1_j$ &\\
         & $\geq 0$ & $\geq 1$ & $\geq 1$ & & & $\xrightarrow{\text{reindex}}$\\
         &$\geq 0$ & $\geq 1$ & &\\
         &$\geq 0$ &&&\\
         $U$
    \end{tabular}
\end{table}

The factorization type is now $k$ with probability $\xi_{n-j,k}^{(i)}$. If $k=0$ we have a lift by Proposition \ref{prop:lifting_prob_1}; otherwise, performing the usual phase I operations \eqref{eq:fI} to isolate the first $k$ variables of $S \cup U$ and relabel once more, we find ourselves in the situation of $\sigma_{n,k}^{(j)}$. Putting everything together yields \eqref{eq:tau_prime}.
\end{proof}

\subsection{Phase III}

Our final goal is to compute $\theta_{n,ijk}$ in terms of the various lifting probabilities already defined. We recall our initial situation from the definition: $f, \fI$, had reduction types $i,j$ and in phase II we found $\fII$ had reduction type $k$ with its $SU^2$ coefficients in $p\Z_p$; note this implies $i+j+k \leq n+1$. We record the valuation table below for this situation with $\{x_0, \ldots, x_n\} = S\coprod T \coprod U$ as before with $S=\{x_0, \ldots, x_{i-1}\}, T = \{x_i, \ldots, x_{i+j-1}\}$ as before and $U=\{x_{i+j}, \ldots, x_{n}\}$.

\begin{table}[h]
    \centering
    \caption{Valuation table for $\fII$}
    \label{tab:val_table_theta_init}
    \begin{tabular}{c c c c}
         $=1_i$ & $\geq 2$ & $\geq 2$ & $=2_j$\\
         $\geq 1$ & $\geq 1$ & $\geq 2$ & \\
         $\geq 1$ & $\geq 1$ & &\\
         $= 0_k$ &&&\\
    \end{tabular}
\end{table}

After possibly an invertible linear transformation over $\Z_p$ involving only $x_{i+j}, \ldots, x_n$, we may isolate the $k$ variables $x_{i+j}, \ldots, x_{i+j+k-1}$ in $\overline{\fII}$. To make this precise, we instead modify our partition by $U=\{x_{i+j}, \ldots, x_{i+j+k-1}\}$ and add the block $W = \{x_{i+j+k}, \ldots, x_n\}$. This is neatly represented in the valuation table below.

\begin{table}[h]
    \centering
    \caption{Valuation table for initial situation of $\theta_{n,ijk}$}
    \label{tab:val_table_theta_init_repart}
    \begin{tabular}{c c c c | c c c | c c | c}
        \multicolumn{4}{c|}{$w=0$} & 
        \multicolumn{3}{c|}{$w=1$} &
        \multicolumn{2}{c|}{$w=2$} & $w=3$ \\ \hline 
         $=1_i$ & $\geq 2$ & $\geq 2$ & $=2_j$ & $\geq 1$ & $\geq 1$ & $\geq 2$ & $\geq 1$ & $\geq 1$ & $\geq 1$\\
         $\geq 1$ & $\geq 1$ & $\geq 2$ &     & $\geq 1$ & $\geq 1$ &     & $\geq 1$ &     & \\
         $\geq 1$ & $\geq 1$ &     &     & $\geq 1$ &     &     &     &     & \\
         $=0_k$ &     &     &     &     &     &     &     &     & 
    \end{tabular}
\end{table}

As in phases I, II, we see that for any $[x_0 : \ldots : x_n] \in X_{\fII}(\Q_p)$, we must have $p \mid x_{i+j}, \ldots, x_{i+j+k-1}$, motivating yet another change of variables
\[\fIII = \frac1p\fII(x_0, \ldots, x_{i+j-1}, px_{i+j}, \ldots, px_{i+j+k-1}, x_{i+j+k}, \ldots, x_n)\]
with valuation table given below.
\begin{table}[h]
    \centering
    \caption{Valuation table for $\fIII$ (color added for emphasis)}
    \label{tab:val_table_fIII}
    \begin{tabular}{c c c c | c c c | c c | c}
        \multicolumn{4}{c|}{$w=0$} & 
        \multicolumn{3}{c|}{$w=1$} &
        \multicolumn{2}{c|}{$w=2$} & $w=3$ \\ \hline 
         $=0_i$ & $\geq 1$ & $\geq 1$ & $=1_j$ & $\geq 0$ & {\color{blue}$\geq 0$} & $\geq 1$ & $\geq 0$ & {\color{red}$\geq 0$} & $\geq 0$\\
         $\geq 1$ & $\geq 1$ & $\geq 2$ &     & $\geq 1$ & $\geq 1$ &     & $\geq 1$ &     & \\
         $\geq 2$ & $\geq 2$ &     &     & $\geq 2$ &     &     &     &     & \\
         $=2_k$ &     &     &     &     &     &     &     &     & 
    \end{tabular}
\end{table}

If the $STW$ (shown in blue) and $TW^2$ (shown in red) entries in Table \ref{tab:val_table_fIII} were both divisible by $p$ at least once, then this table would ``collapse" into one resembling Table \ref{tab:val_table_theta_init}. In making this precise, we prove the following.

\begin{lemma}\label{lem:thetas}
    Let $n \geq 2$ and $i,j,k \in \{1,2,3\}$. If $i+j+k = n+1$ then
    \[\theta_{n,ijk} = 0.\]
    If $i+j+k < n+1$ then we have
\begin{equation}\label{eq:thetas}
    \theta_{n,ijk} = \scalebox{0.9}{$\displaystyle 1- \frac{1}{p^{ij(n-i-j-k+1) + j\binom{n-i-j-k+2}{2}}} + \frac{1}{p^{ij(n-i-j-k+1) + j\binom{n-i-j-k+2}{2}}} \left(\xi_{n-j-k,0}^{(i)} + \sum_{1 \leq \ell \leq 3} \xi_{n-j-k,\ell}^{(i)} \theta_{n,jk\ell}\right)$}.
\end{equation}
\end{lemma}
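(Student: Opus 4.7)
The plan is to mirror Lemma~\ref{lem:tau_prime} and split into two cases according to whether $|W|=0$. If $i+j+k=n+1$, so $W=\varnothing$, then $\theta_{n,ijk}=0$: cascading the divisibility conditions from phases I--III forces any primitive $\Q_p$-point of $X_{\fIII}$ to have $p$ dividing each of its $S$-, $T$-, and $U$-coordinates (from the type-$i$, type-$j$, and type-$k$ structures of $\overline{f}$, $\overline{\fI}$, and $\overline{\fII}$ respectively), which with $W$ empty exhausts every coordinate and contradicts primitivity.

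In the remaining case $i+j+k<n+1$, I would analyze the two highlighted entries of Table~\ref{tab:val_table_fIII}, namely $STW$ (blue) and $TW^2$ (red). If some such coefficient is a $p$-adic unit, then specializing the $S$-, $U$-, and $W$-variables appropriately leaves a nonzero linear form in the $T$-variables, whose $\F_p$-zero lifts by Hensel's lemma to a $\Q_p$-point of $X_{\fIII}$. Counting monomials, there are $ij(n-i-j-k+1)$ $STW$-coefficients and $j\binom{n-i-j-k+2}{2}$ $TW^2$-coefficients, so the probability that every one of them lies in $p\Z_p$ equals $1/p^{ij(n-i-j-k+1)+j\binom{n-i-j-k+2}{2}}$, accounting for the two prefactors in~\eqref{eq:thetas}.

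Conditioned on every $STW$ and $TW^2$ coefficient lying in $p\Z_p$, every monomial of $\fIII$ involving a $T$- or $U$-variable has valuation $\geq 1$, so $\overline{\fIII}$ is supported on $S^3$, $S^2W$, $SW^2$, $W^3$ and depends only on the $n-j-k+1$ variables in $S\cup W$. Because the $S^3$ coefficient is a unit of factorization type $i$, this reduced form satisfies condition~$(i)$, and Lemma~\ref{lem:xi_cond} gives its factorization type as $\ell$ with probability $\xival[i]{n-j-k}{\ell}$. When $\ell=0$, Proposition~\ref{prop:lifting_prob_1} applied to $\overline{\fIII}|_{T=U=0}$ yields a Hensel-liftable smooth point, giving conditional lifting probability~$1$; when $\ell\in\{1,2,3\}$, I would identify the conditional lifting probability with $\theta_{n,jk\ell}$ by relabeling the variables so that $T$, $U$, and the type-$\ell$ subspace of $S\cup W$ occupy the phase~I, II, III slots of a fresh three-phase procedure. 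Summing these contributions gives~\eqref{eq:thetas}.

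The main obstacle will be the final identification for $\ell\in\{1,2,3\}$: verifying that under the relabeling, the valuation table of the post-promotion $\fIII$ matches entry-by-entry the initial valuation table appearing in the definition of $\theta_{n,jk\ell}$. Unlike the three-part setting of Lemma~\ref{lem:tau_prime}, the $S$-block of size $i$ must be redistributed across the new $U$- and $W$-blocks when $i>0$, and the additional $w=1,2,3$ slices of the new four-part table must acquire the correct combination of $\geq 0$ and $\geq 1$ entries from the post-promotion bounds. Once this bookkeeping is verified, the recursion assembles into~\eqref{eq:thetas}.
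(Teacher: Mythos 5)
Your proposal is correct and follows essentially the same route as the paper: the same contradiction argument when $W=\varnothing$, the same Hensel-lifting analysis of the $STW$ and $TW^2$ coefficients with the same monomial count, and the same stratification by the type $\ell$ of the collapsed reduction on $S\cup W$ using $\xival[i]{n-j-k}{\ell}$ and Proposition \ref{prop:lifting_prob_1}. The final bookkeeping you flag as the main obstacle is exactly what the paper carries out in Table \ref{tab:val_table_rotate}: after conditioning, merging $S$ and $W$ into a single block produces entry-by-entry a rotated copy of Table \ref{tab:val_table_theta_init}, so the identification with $\theta_{n,jk\ell}$ goes through as you anticipate.
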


\begin{proof}
    That $\theta_{n,ijk}=0$ when $i+j+k=n+1$ follows from assuming $[x_0: \ldots: x_n] \in X_{\fIII}(\Q_p)$ and tracing it back to a point on $X_f(\Q_p)$ with all coordinates divisible by $p$, producing a contradiction  as in Lemmas \ref{lem:sigma} and \ref{lem:tau}. Assume then that $i+j+k < n$.

    If the $STW$ or $TW^2$ coefficients (the blue and red entries, respectively in Table \ref{tab:val_table_fIII}) are in $\Z_p^\times$ then there exists an $\F_p$-point on $X_{\fIII}$ that lifts via Hensel's lemma. The argument follows exactly the same as that in Lemma \ref{lem:tau_prime}. These coefficients are all in $p\Z_p$ with probability equal to that of $ij$ linear forms in $n-i-j-k$ variables and $j$ quadratic forms in $n-k-j-k$ variables over $\F_p$ vanishing, given by $1/p^{ij(n-i-j-k+1) + j\binom{n-i-j-k+2}{2}}$.

    Assuming we are in this situation, the reduction is a cubic form in $n-j-k+1$ variables,
    \[\overline{\fIII} = \overline{\fIII}(x_0, \ldots, x_{i-1}, x_{i+j+k}, \ldots, x_n),\]
    due to all other monomials vanishing. Note that $\fIII$ is known to satisfy condition $(i)$. As is our custom, we analyze the lifting probability when $\fIII$ has type $\ell = 1,2,3$, which each occur with probability $\xi_{n-j-k,\ell}^{(i)}$; the lifting probability is 1 otherwise by Proposition \ref{prop:lifting_prob_1}.

    Repartitioning by setting $S' = S \cup W$, $T'=T$, and $U'=U$, we obtain a rotated version of Table \ref{tab:val_table_theta_init}, so the lifting probability is given by $\theta_{n, jk\ell}$. This final step is illustrated below in Table \ref{tab:val_table_rotate}. Putting everything together yields \eqref{eq:thetas}.
\end{proof}

\begin{table}[h]
    \centering
    \caption{Rotating and collapsing the valuation tetrahedron for $\fIII$ (color added for emphasis)}
    \label{tab:val_table_rotate}
    \begin{tabular}{c c c c | c c c | c c | c }
        \multicolumn{4}{c|}{$w=0$} & 
        \multicolumn{3}{c|}{$w=1$} &
        \multicolumn{2}{c|}{$w=2$} & $w=3$ \\ \hline 
         {\color{blue}$=0_i$} & {\color{red}$\geq 1$} & {\color{gray}$\geq 1$} & $=1_j$ & {\color{blue}$\geq 0$} & {\color{red}$\geq 1$} & {\color{gray}$\geq 1$} & {\color{blue}$\geq 0$} & {\color{red}$\geq 1$} & {\color{blue}$\geq 0$}\\
         {\color{red}$\geq 1$} & {\color{gray}$\geq 1$} & $\geq 2$ &     & {\color{red}$\geq 1$} & {\color{gray}$\geq 1$} &     & {\color{red}$\geq 1$} &     & \\
         {\color{gray}$\geq 2$} & $\geq 2$ &     &     & {\color{gray}$\geq 2$} &     &     &     &     & \\
         $=2_k$ &     &     &     &     &     &     &     &     & \\
    \end{tabular}\\[2ex]
    \begin{tabular}{r r c c c c l}
       &$S \cup W$ &&&&& $T$ \\
        && {\color{blue}$=0^{(i)}$} & {\color{red}$\geq 1$} & {\color{gray}$\geq 1$} & $=1_j$ \\
        $\xrightarrow{\text{reindex}}$ && {\color{red}$\geq 1$} & {\color{gray}$\geq 1$} & $\geq 2$ \\
        && {\color{gray}$\geq 2$} & $\geq 2$  \\
        && $=2_k$ \\
        &$U$        
    \end{tabular}
    \end{table}

\section{Obtaining explicit rational functions}
\label{sec:explicit_functions}

We are now ready to prove Theorem \ref{thm:rho_p_all_n}, that for $n \geq 1$ and all primes $p$, we have
\[\rho_n(p) = 1 - \frac{g_n(p)}{h_n(p)} \]
for the explicit polynomials below in \eqref{eq:g1} -- \eqref{eq:h8} when $1 \leq n \leq 8$, and $\rho_n(p) = 1$ for $n \geq 9$. Note that $\frac{g_n}{h_n}$ as presented here are not all in lowest terms, in order to make the expressions more compact.

{
\begin{dgroup*}
\setlength{\parskip}{1ex}
\setlength{\parindent}{0ex}
	\begin{dmath}\label{eq:g1}
		g_1(p) = (p^2 + 1)^2
	\end{dmath}
	\begin{dmath}
		h_1(p) = 3(p^4 + p^3 + p^2 + p + 1)
	\end{dmath}
    \begin{dmath}\label{eq:g2}
        g_2(p) = p^9 - p^8 + p^6 - p^4 + p^3 + p^2 - 2p + 1
    \end{dmath}
    \begin{dmath}
        h_2(p) = 3(p^6 + p^3 + 1)(p^4 + 1)(p^2 + 1) 
    \end{dmath}
    \begin{dmath}    
        g_3(p) = \left(3  p^{26} + p^{24} + p^{23} + 4  p^{22} - 3  p^{21} + 3  p^{20} + 2  p^{19} + 2  p^{18} - p^{17} + p^{14} + p^{13} - 2  p^{12} + 3  p^{11} + 3  p^{7}\right) \left(p^{2} + 1\right) \left(p + 1\right)^{2} \left(p - 1\right)^{4}
    \end{dmath}
    \begin{dmath}
        h_3(p) = 9  \left(p^{13} - 1\right) \left(p^{7} + 1\right) \left(p^{7} - 1\right) \left(p^{6} + 1\right) \left(p^{5} - 1\right) \left(p^{3} + 1\right) \left(p^{3} - 1\right)
    \end{dmath}
    \begin{dmath}\label{eq:g4}
        g_4(p) = \left(p^{46} + 3  p^{41} + p^{40} - p^{39} + p^{37} + p^{36} + p^{35} - 3  p^{34} + 3  p^{27} - p^{26} + p^{25} + p^{19}\right) \left(p^{2} + 1\right) \left(p + 1\right)^{2} \left(p - 1\right)^{4}
    \end{dmath}
    \begin{dmath}
        h_4(p) = 9  \left(p^{19} - 1\right) \left(p^{17} - 1\right) \left(p^{10} + 1\right) \left(p^{9} + 1\right) \left(p^{9} - 1\right) \left(p^{7} - 1\right) \left(p^{5} + 1\right)
    \end{dmath}
    \begin{dmath}
        g_5(p) = \left(3  p^{91} - 3  p^{90} + 3  p^{88} + 3  p^{85} - 3  p^{84} + 3  p^{82} - 3  p^{81} + 3  p^{79} + 3  p^{78} + 3  p^{76} - 3  p^{75} + 3  p^{73} - 2  p^{72} + p^{71} + 4  p^{70} - 3  p^{69} + 3  p^{67} - 3  p^{66} + 3  p^{64} - 3  p^{62} + 3  p^{61} + 3  p^{59} + 3  p^{58} - 3  p^{56} + 3  p^{55} - 3  p^{53} + 3  p^{52} + 3  p^{49} - 3  p^{47} + 3  p^{46} - 3  p^{44} + 3  p^{43} - 3  p^{41} + 3  p^{40} - 3  p^{38} + 3  p^{37}\right) \left(p^{5} - 1\right) \left(p^{2} + 1\right) \left(p + 1\right)^{2} \left(p - 1\right)^{4}
    \end{dmath}
    \begin{dmath}
        h_5(p) = 27  \left(p^{27} - 1\right) \left(p^{25} - 1\right) \left(p^{23} - 1\right) \left(p^{14} + 1\right) \left(p^{13} + 1\right) \left(p^{13} - 1\right) \left(p^{12} + 1\right) \left(p^{7} + 1\right) \left(p^{7} - 1\right) \left(p^{6} + 1\right)
    \end{dmath}
    \begin{dmath}
        g_6(p) = \left(3  p^{105} + p^{97} + p^{96} + p^{95} - 3  p^{93} + 3  p^{81}\right) \left(p + 1\right)^{2} \left(p - 1\right)^{7}
    \end{dmath}
    \begin{dmath}
        h_6(p) = 27  \left(p^{31} - 1\right) \left(p^{24} - p^{23} + p^{19} - p^{18} + p^{17} - p^{16} + p^{14} - p^{13} + p^{12} - p^{11} + p^{10} - p^{8} + p^{7} - p^{6} + p^{5} - p + 1\right) \left(p^{20} - p^{19} + p^{17} - p^{16} + p^{14} - p^{13} + p^{11} - p^{10} + p^{9} - p^{7} + p^{6} - p^{4} + p^{3} - p + 1\right) \left(p^{17} + 1\right) \left(p^{17} - 1\right) \left(p^{16} + 1\right) \left(p^{11} - 1\right) \left(p^{8} + p^{7} - p^{5} - p^{4} - p^{3} + p + 1\right) \left(p^{8} - p^{7} + p^{5} - p^{4} + p^{3} - p + 1\right) \left(p^{8} + 1\right) \left(p^{6} + 1\right) \left(p^{5} + 1\right) \left(p^{5} - 1\right) \left(p^{4} + 1\right) \left(p^{3} + 1\right) \left(p^{3} - 1\right)^{3}
    \end{dmath}
    \begin{dmath}
        g_7(p) = \left(p^{4} + 1\right) \left(p^{2} + 1\right)^{2} \left(p + 1\right)^{4} \left(p - 1\right)^{9} p^{141}
    \end{dmath}
    \begin{dmath}
        h_7(p) = 27  \left(p^{43} - 1\right) \left(p^{41} - 1\right) \left(p^{24} - p^{23} + p^{21} - p^{20} + p^{18} - p^{17} + p^{15} - p^{14} + p^{12} - p^{10} + p^{9} - p^{7} + p^{6} - p^{4} + p^{3} - p + 1\right) \left(p^{22} + 1\right) \left(p^{20} + 1\right) \left(p^{19} + 1\right) \left(p^{19} - 1\right) \left(p^{13} - 1\right) \left(p^{12} + p^{11} - p^{9} - p^{8} + p^{6} - p^{4} - p^{3} + p + 1\right) \left(p^{12} - p^{11} + p^{9} - p^{8} + p^{6} - p^{4} + p^{3} - p + 1\right) \left(p^{11} + 1\right) \left(p^{11} - 1\right) \left(p^{10} + 1\right) \left(p^{8} - p^{7} + p^{5} - p^{4} + p^{3} - p + 1\right) \left(p^{7} + 1\right) \left(p^{5} + 1\right) \left(p^{5} - 1\right) \left(p^{3} - 1\right)^{3}
    \end{dmath}
    \begin{dmath}
        g_8(p) = \left(p^{9} - 1\right) \left(p^{7} - 1\right) \left(p^{4} + 1\right) \left(p^{2} + 1\right)^{2} \left(p + 1\right)^{3} \left(p - 1\right)^{9} p^{219}
    \end{dmath}
    \begin{dmath}\label{eq:h8}
        h_8(p) = 27  \left(p^{53} - 1\right) \left(p^{49} - 1\right) \left(p^{47} - 1\right)\left(p^{40} - p^{39} + p^{35} - p^{34} + p^{30} - p^{28} + p^{25} - p^{23} + p^{20} - p^{17} + p^{15} - p^{12} + p^{10} - p^{6} + p^{5} - p + 1\right) \left(p^{32} - p^{31} + p^{29} - p^{28} + p^{26} - p^{25} + p^{23} - p^{22} + p^{20} - p^{19} + p^{17} - p^{16} + p^{15} - p^{13} + p^{12} - p^{10} + p^{9} - p^{7} + p^{6} - p^{4} + p^{3} - p + 1\right) \left(p^{27} + 1\right) \left(p^{27} - 1\right) \left(p^{26} + 1\right) \left(p^{25} + 1\right) \left(p^{25} - 1\right) \left(p^{24} + 1\right) \left(p^{17} - 1\right) \left(p^{13} + 1\right) \left(p^{13} - 1\right) \left(p^{12} + 1\right) \left(p^{11} - 1\right) \left(p^{6} + 1\right) \left(p^{3} - 1\right)^{3} 
    \end{dmath}
\end{dgroup*}
}

\begin{proof}[Proof of Theorem \ref{thm:rho_p_all_n}]

To calculate $\rho_{n}(p)$, we assemble the relations \eqref{eq:rho_structure_eq}, \eqref{eq:rho_(j)}, \eqref{eq:sigmas}, \eqref{eq:sigma_cond}, \eqref{eq:sigma_prime}, \eqref{eq:tau}, \eqref{eq:tau_prime}, and \eqref{eq:thetas} into a system of 64 linear equations in the 64 unknown lifting probabilities $\rho$, $\rho^{(i)}$, $\sigma_i$, $\sigma_i^{(k)}$, $\sigma_i'$, $\tau_{ij}$, $\tau_{ij}'$, $\theta_{ijk}$, where $i,j,k$ take values in $\{1,2,3\}$. The unique solutions in the field of rational functions in the transcendental $p$ yield the values for $\rho_n(p)$ described explicitly above in \eqref{eq:g1} -- \eqref{eq:h8} in terms of $g_n(p), h_n(p)$ for $1 \leq n \leq 8$. For $n = 9$ the unique solution is $\rho_9=1$ (and all other lifting probabilities are equal to 1). For $n > 9$, the conclusion is deduced by specializing, i.e.\ to a nonzero cubic form in only 10 of the $n+1$ variables, setting the rest to zero.

In practice, this was accomplished via a symbolic solve in \texttt{Sage}. While all 64 probabilities could be solved for at once, it is was significantly faster to observe sub-linear systems in fewer unknowns and exploit this by solving them first, thereby reducing the size of subsequent solves:
\begin{enumerate}
    \item Observe that the 27 equations \eqref{eq:thetas} in $\theta_{n,ijk}$ involve no other lifting probabilities.
    \item Once $\theta_{n,ijk}$ have been determined, the 27 equations $\eqref{eq:sigma_cond}, \eqref{eq:tau}, \eqref{eq:tau_prime}$ depend only on $\sigma_{n,i}^{(k)}$, $\tau_{n,ij}$, $\tau_{n,ij}'$.
    \item Solve for the remaining 10 probabilities $\rho_n$, $\rho_n^{(i)}$, $\sigma_{n,i}$, $\sigma_{n,i}'$ in the system \eqref{eq:rho_structure_eq}, \eqref{eq:rho_(j)}, \eqref{eq:sigmas}, \eqref{eq:sigma_prime}.
\end{enumerate}
An implementation of this approach is available in the file \texttt{compute_rho_p.ipynb} available in our GitHub repository \cite{BeneishKeyes_code_repo}. 
\end{proof}

\section{Numerical approximations}
\label{sec:numerics}

We now seek to obtain precise numerical approximations for $\rhoELS_n$, the density of everywhere locally soluble cubic hypersurfaces when $2 \leq n \leq 8$ (and hence $\rho_n$ for $n\geq 4$ by \cite{BLeBS})\footnote{Note that for $n=1$, we have $\rho_n(p) \sim \frac{2}{3}$ as $p \to \infty$, so the product $\prod_{p \leq A} \rho_{1}(p)$ goes to 0 in the limit as $A \to \infty$.}, by truncating, i.e.\
\[\rho_n \approx \prod_{p \leq A} \rho_n(p).\] To determine how large to make $A$ and precisely estimate the error, we relate $\rho_n$ to certain values of the Riemann zeta function and invoke classical techniques for approximating $\zeta(s)$.

From the explicit description of $\rho_n(p)$ given in \eqref{eq:g2} -- \eqref{eq:h8}, we extract the asymptotics 
\[1 - \rho_n(p) = \frac{g_n(p)}{h_n(p)} \sim \frac{1}{\gamma_n p^{\delta_n}}\]
for $2 \leq n \leq 8$ with $\gamma_n, \delta_n$ given below.
\begin{table}[h]
    \centering
    \caption{Asymptotics for $\frac{g_n(p)}{h_n(p)}$}
    \label{tab:asymptotics}
    \begin{tabular}{r c c }
         \hline $n$ & $\gamma_n$ & $\delta_n$ \\ \hline
         2 & 3 & 3 \\
         3 & 3 & 10 \\
         4 & 9 & 22 \\
         5 & 9 & 43 \\
         6 & 9 & 78 \\
         7 & 27 & 129 \\
         8 & 27 & 201\\ \hline
    \end{tabular}
\end{table}

An explicit check reveals that as a function on primes $p$, $\frac{g_n}{h_n}$ approaches this asymptotic from below.

\begin{lemma}\label{lem:comparing_g/h}
    For all $2 \leq n \leq 8$ and all primes $p$, we have
    \[\frac{g_n(p)}{h_n(p)} \leq \frac{1}{\gamma_np^{\delta_n}}.\]
\end{lemma}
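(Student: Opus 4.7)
The plan is to reduce the stated inequality to a polynomial nonnegativity statement and then verify it via a combination of symbolic manipulation and a finite check. Since $g_n(p)$ and $h_n(p)$ are visibly positive at each prime $p \geq 2$ (the denominator $h_n$ is a positive product of cyclotomic-like factors, and $g_n(p) > 0$ follows either from the explicit expressions or from the fact that $\frac{g_n(p)}{h_n(p)} = 1 - \rho_n(p) \in [0,1]$), the inequality $\frac{g_n(p)}{h_n(p)} \leq \frac{1}{\gamma_n p^{\delta_n}}$ is equivalent, after clearing denominators, to
\[P_n(p) := h_n(p) - \gamma_n\, p^{\delta_n}\, g_n(p) \geq 0.\]

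Next I would argue that $P_n$ is a polynomial of strictly smaller degree than $h_n$. Indeed, the asymptotic $\frac{g_n(p)}{h_n(p)} \sim \frac{1}{\gamma_n p^{\delta_n}}$, extracted directly from the explicit formulas \eqref{eq:g2}--\eqref{eq:h8}, says precisely that the leading coefficient of $\gamma_n p^{\delta_n} g_n(p)$ matches that of $h_n(p)$; thus $\deg P_n < \deg h_n$. In particular, for each $n$, the leading coefficient of $P_n$ records the next-to-leading behavior of the ratio, which by Table \ref{tab:asymptotics} is consistent with $P_n$ having positive leading coefficient.

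To conclude, I would check that $P_n(p) \geq 0$ holds on the entire real interval $[2, \infty)$, which a fortiori covers every prime. Since the polynomials are explicit (albeit of high degree, up to $\deg h_8 > 400$ after multiplying out), this is best handled symbolically in \texttt{Sage}: for each $n \in \{2, \ldots, 8\}$, compute $P_n(p)$, isolate its real roots (e.g.\ via \texttt{real\_roots} or Sturm's theorem), verify that none exceeds $2$, and confirm that the leading coefficient of $P_n$ is positive so that $P_n(p) > 0$ for all sufficiently large $p$. Together these two facts imply $P_n(p) \geq 0$ on $[2,\infty)$. As a sanity check, one can also directly tabulate $\gamma_n p^{\delta_n} \frac{g_n(p)}{h_n(p)}$ at the first several primes and confirm that this quantity lies in $(0,1]$ and is increasing toward $1$.

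The main obstacle is not mathematical but bookkeeping: the polynomials involved are unwieldy, so any attempt to prove nonnegativity by exhibiting an explicit expression of $P_n$ as a polynomial with nonnegative coefficients is unlikely to succeed uniformly across $n$. The root-isolation approach described above is robust and finite, and can be implemented in the same \texttt{compute\_rho\_p.ipynb} notebook of \cite{BeneishKeyes_code_repo} used elsewhere in the paper, yielding a fully verifiable proof for all $2 \leq n \leq 8$.
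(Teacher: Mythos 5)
Your reduction is correct: since $h_n(p)>0$ and $\gamma_n p^{\delta_n}>0$, the claim is equivalent to $P_n(p)=h_n(p)-\gamma_n p^{\delta_n}g_n(p)\geq 0$ on $[2,\infty)$, and because the asymptotic forces the leading terms to cancel, this is a finite check on an explicit integer polynomial, rigorously dischargeable by real-root isolation plus a positive-leading-coefficient check. The paper offers no argument beyond the phrase ``an explicit check reveals,'' so your proposal is essentially the same (computational) approach, just spelled out; it is complete provided the symbolic verification is actually run for each $n\in\{2,\dots,8\}$.
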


Recall the Riemann zeta function and its usual Euler product, $\zeta(s) = \sum_{m \geq 1} \frac{1}{m^s} = \prod_p \left(1 - \frac{1}{p^s}\right)^{-1}$.
For $A$ a positive real number we denote by $\zeta_{>A}(s)$ the tail of the Euler product,
\[\zeta_{>A}(s) = \prod_{p > A}\left(1 - \frac{1}{p^s}\right)^{-1}.\]
It turns out that $\prod_p \rho_n(p)$ is close to its truncation when $\zeta_{>A}(\delta_n)$ is sufficiently close to 1.

\begin{lemma}\label{lem:rho_truncation_error}
    Fix $2 \leq n \leq 8$. Let $B \geq 1$ be a real number. If $\zeta_{>A}(\delta_n) \leq B$, then we have
    \[\left|\rho_n^{\mathrm{ELS}} - \prod_{p \leq A}\rho_n(p)\right| \leq 1 - \frac{1}{B^{1/\gamma_n}}\]
\end{lemma}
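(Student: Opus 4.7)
The plan is to first simplify the absolute difference by factoring out the truncated product, then use Lemma \ref{lem:comparing_g/h} to bound each tail factor in terms of $p^{-\delta_n}$, and finally compare the resulting infinite product to a power of $\zeta_{>A}(\delta_n)$ by means of an elementary concavity inequality.

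First, since each $\rho_n(p) \in [0,1]$, the truncated product $\prod_{p \leq A} \rho_n(p)$ lies in $[0,1]$ and is at least $\rho_n^{\mathrm{ELS}} = \prod_p \rho_n(p)$. Writing
\[\rho_n^{\mathrm{ELS}} = \Big(\prod_{p \leq A}\rho_n(p)\Big) \cdot \Big(\prod_{p > A}\rho_n(p)\Big),\]
I would deduce
\[\Big|\rho_n^{\mathrm{ELS}} - \prod_{p \leq A}\rho_n(p)\Big| = \Big(\prod_{p \leq A}\rho_n(p)\Big) \Big(1 - \prod_{p > A}\rho_n(p)\Big) \leq 1 - \prod_{p > A}\rho_n(p),\]
so it suffices to give an upper bound on $1 - \prod_{p > A}\rho_n(p)$, equivalently a lower bound on $\prod_{p > A}\rho_n(p)$.

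Next, by Lemma \ref{lem:comparing_g/h}, for every prime $p$ we have $\rho_n(p) = 1 - g_n(p)/h_n(p) \geq 1 - 1/(\gamma_n p^{\delta_n})$. To relate this to $\zeta_{>A}(\delta_n)^{-1} = \prod_{p > A}(1 - p^{-\delta_n})$, I would use the elementary inequality
\[1 - \tfrac{x}{\gamma} \;\geq\; (1-x)^{1/\gamma} \qquad \text{for all } \gamma \geq 1,\ 0 \leq x \leq 1,\]
which follows by comparing derivatives of the two sides at $x = 0$ and noting the left side is linear while the right is concave. Setting $x = p^{-\delta_n}$ and $\gamma = \gamma_n$ (which satisfies $\gamma_n \geq 1$ by Table \ref{tab:asymptotics}), this gives
\[\rho_n(p) \;\geq\; 1 - \tfrac{1}{\gamma_n p^{\delta_n}} \;\geq\; \bigl(1 - p^{-\delta_n}\bigr)^{1/\gamma_n}.\]

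Taking the product over $p > A$ and applying the hypothesis $\zeta_{>A}(\delta_n) \leq B$ yields
\[\prod_{p > A} \rho_n(p) \;\geq\; \prod_{p > A} \bigl(1 - p^{-\delta_n}\bigr)^{1/\gamma_n} \;=\; \zeta_{>A}(\delta_n)^{-1/\gamma_n} \;\geq\; B^{-1/\gamma_n},\]
and combining with the first step gives the desired bound $1 - B^{-1/\gamma_n}$. The only genuinely nontrivial ingredient is the concavity inequality $1 - x/\gamma \geq (1-x)^{1/\gamma}$; everything else is bookkeeping. I do not anticipate any real obstacle, since $\gamma_n \geq 1$ holds uniformly in the range $2 \leq n \leq 8$.
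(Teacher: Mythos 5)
Your proof is correct and follows essentially the same route as the paper: factor out the truncated product, reduce to a lower bound on $\prod_{p>A}\rho_n(p)$ via Lemma \ref{lem:comparing_g/h}, and compare with $\zeta_{>A}(\delta_n)^{-1/\gamma_n}$. The only (cosmetic) difference is that you establish the key inequality $1-\tfrac{x}{\gamma}\geq(1-x)^{1/\gamma}$ by concavity in one stroke, whereas the paper obtains the same comparison by taking logarithms and bounding the Taylor series termwise using $\gamma^j\geq\gamma$.
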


\begin{proof}
    We drop the subscripts $n$ for brevity. We have
    \begin{align*}
        \left|\rho^{\mathrm{ELS}} - \prod_{p \leq A}\rho(p)\right| &= \left(\prod_{p \leq A}\rho(p)\right) \left(1 - \prod_{p > A}\rho(p)\right) \\
        &\leq 1 - \prod_{p > A}\rho(p)
    \end{align*}
    since $\rho(p)$ is at most 1. 
    
    Suppose we would like to show that
    \begin{equation}\label{eq:epsilon1}
        1 - \prod_{p > A}\rho(p) \leq \epsilon_1.
    \end{equation}
    This is equivalent to 
    \begin{equation}\label{eq:epsilon2}
        \log\left(\prod_{p > A}\rho(p)\right) \geq \epsilon_2 = \log(1 - \epsilon_1).
    \end{equation}
    Note that the quantities in \eqref{eq:epsilon2} are negative. By the Taylor series for the logarithm and absolute convergence of the involved sums, we may write
    \begin{align*}
        \log\left(\prod_{p > A}\rho(p)\right) &= \sum_{p > A} \sum_{j \geq 1} \frac{-(g/h)^j}{j}\\
        & \geq \sum_{p > A} \sum_{j \geq 1} \frac{-1}{j\gamma^jp^{\delta j}} &\text{(by Lemma \ref{lem:comparing_g/h})} \\
        & \geq \frac{-1}{\gamma} \log\left( \zeta_{>A}(\delta) \right).
    \end{align*}
    Thus to establish \eqref{eq:epsilon2}, it suffices to show that 
    \begin{equation}\label{eq:epsilon3}
        \frac{1}{\gamma} \log\left(\zeta_{>A}(\delta)\right) \leq \epsilon_3 = -\epsilon_2.
    \end{equation}
    Finally, we rearrange \eqref{eq:epsilon3} to give the equivalent
    \begin{equation}\label{eq:epsilon4}
        \zeta_{>A}(\delta) \leq \epsilon_4 = e^{\gamma\epsilon_3}.
    \end{equation}

    Suppose we know $\zeta_{>A}(\delta) < B$ for some choice of $A$. Note that such a choice exists for $B$ arbitrarily close to 1, since $\zeta_{>A}(\delta)$ is the tail of a convergent infinite product. Tracing through \eqref{eq:epsilon1} -- \eqref{eq:epsilon4}, we establish \eqref{eq:epsilon1} with $\epsilon_1 = 1 - B^{-1/\gamma}$, from which the lemma follows.    
\end{proof}

An upper estimate for $\zeta_{>A}(s)$ follows from the classical Euler--Maclaurin summation formula applied to $\zeta(s)$; see e.g. \cite{Apostol_Euler_summation, cohen}.

\begin{lemma}\label{lem:zeta_euler_mac}
    Fix a real number $A \geq 1$, an integer $M \geq 1$, an even integer $I \geq 2$, and let $B_{2i}$ denote the $2i$-th Bernoulli number. For integral $s \geq 2$ we have
    \begin{equation*}\label{eq:zeta_>A_bound}
        \zeta_{>A}(s) \leq \scalebox{0.85}{$\displaystyle \left(\prod_{p \leq A}\left(1 - \frac{1}{p^s}\right) \right) 
            \left( \sum_{m=1}^M \frac{1}{m^s} + \frac{1}{(s-1)M^{s-1}} - \frac{1}{2M^s} + \sum_{i=1}^I \frac{B_{2i}(s+2i - 2)!}{(2i)!(s-1)!M^{s+2i-1}} + \left| \frac{B_{2I + 2}(s+2I)!}{(2I + 2)!(s-1)!M^{s+2I + 1}} \right| \right)$}.
    \end{equation*}
\end{lemma}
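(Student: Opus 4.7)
The plan is to reduce the statement to the classical Euler--Maclaurin expansion of the Riemann zeta function, then absorb the remainder term into an absolute-value estimate. The starting point is the identity
\[
\zeta_{>A}(s) = \zeta(s) \cdot \prod_{p \leq A}\left(1 - \frac{1}{p^s}\right),
\]
which follows immediately from the Euler product for $\zeta(s)$. Since the finite product on the right has factors in $(0,1]$, it suffices to produce an upper bound for $\zeta(s)$ alone that matches the large parenthesized expression in the statement.

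To obtain such a bound, I would apply the Euler--Maclaurin summation formula to $f(x) = x^{-s}$ on the interval $[M,\infty)$. Splitting $\zeta(s) = \sum_{m=1}^M m^{-s} + \sum_{m=M+1}^\infty m^{-s}$ and computing the elementary ingredients
\[
\int_M^\infty \frac{dx}{x^s} = \frac{1}{(s-1)M^{s-1}}, \qquad f^{(k)}(M) = (-1)^k \frac{(s+k-1)!}{(s-1)!\,M^{s+k}},
\]
one obtains the Euler--Maclaurin expansion
\[
\zeta(s) = \sum_{m=1}^M \frac{1}{m^s} + \frac{1}{(s-1)M^{s-1}} - \frac{1}{2M^s} + \sum_{i=1}^I \frac{B_{2i}(s+2i-2)!}{(2i)!(s-1)!\,M^{s+2i-1}} + R_I(s,M),
\]
where $R_I(s,M)$ is the standard Euler--Maclaurin remainder after $I$ correction terms.

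The final step is to bound $R_I$. For the function $f(x) = x^{-s}$, the derivative $f^{(2I+2)}$ is of constant sign on $[M,\infty)$, so the classical error estimate for Euler--Maclaurin (see e.g.~\cite{Apostol_Euler_summation, cohen}) gives
\[
|R_I(s,M)| \leq \left| \frac{B_{2I+2}}{(2I+2)!}\bigl(f^{(2I+1)}(\infty) - f^{(2I+1)}(M)\bigr)\right| = \left|\frac{B_{2I+2}(s+2I)!}{(2I+2)!(s-1)!\,M^{s+2I+1}}\right|,
\]
using that $f^{(2I+1)}$ vanishes at infinity. Replacing $R_I$ with $|R_I|$ bounded by this expression gives an upper bound for $\zeta(s)$, and multiplying through by the non-negative factor $\prod_{p \leq A}(1 - p^{-s})$ yields the claimed inequality for $\zeta_{>A}(s)$.

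The only conceptual care needed is in citing the error bound for Euler--Maclaurin in a form consistent with the conventions used here (signs of Bernoulli numbers, derivative orders); this is routine but is the one place where a sign or index error could propagate. Once the formula is written down with $f(x) = x^{-s}$, every remaining manipulation is an elementary factorial computation.
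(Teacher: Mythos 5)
Your proposal is correct and follows exactly the route the paper intends: the paper gives no written proof for this lemma, simply citing the classical Euler--Maclaurin expansion of $\zeta(s)$, and your argument (factor out the finite Euler product, apply Euler--Maclaurin to $f(x)=x^{-s}$ on $[M,\infty)$, and bound the remainder by the first omitted term using the constant sign of $f^{(2I+2)}$) is precisely the standard derivation being invoked. The factorial computations for $f^{(2i-1)}(M)$ and the sign bookkeeping in your sketch check out against the stated formula.
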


\begin{remark}
    The upper bound for $\zeta_{>A}(s)$ has the advantage of being able to be computed via exact arithmetic in \texttt{Sage}. Then when combining with Lemma \ref{lem:rho_truncation_error} we convert to inexact arithmetic only at the final step, choosing to round up. See \texttt{rho_numerics.ipynb} in our GitHub repository \cite{BeneishKeyes_code_repo}.
\end{remark}

Suppose we want our truncation $\prod_{p \leq A} \rho_n(p)$ to be accurate within $10^{-D}$ of the true value of $\rho_n^{\mathrm{ELS}}$. Then we set $B = \left( \frac{10^{D}}{10^{D} - 1} \right)^{\gamma_n}$ and apply Lemmas \ref{lem:rho_truncation_error} and \ref{lem:zeta_euler_mac} with various values of $D$ and $A$ with $M=1000, I=4$ to obtain the accuracy data in Table \ref{tab:precision}. 
    \begin{table}[h]
        \centering
        \caption{Accuracy of truncations}
        \label{tab:precision}
        \begin{tabular}{| c | c | c | c |}
             \hline $n$ & $A$ & $1 - \prod_{p \leq A} \rho_n(p) \approx$ & $D$ \\ \hline 
             2 & 61 & 0.0274 & 5  \\ 
              & 12919 &  & 10  \\ \hline
             3 & 11 & 0.00007328 & 10  \\
              & 503 &  & 26  \\ \hline
             4 & 5 & $5.022 \cdot 10^{-9}$ & 16  \\ 
              & 179 &  & 50 \\ \hline
             5 & 3 & $1.343 \cdot 10^{-15}$ & 21  \\ 
              & 17 &  & 53 \\ \hline
             6 & 3 & $3.502 \cdot 10^{-26}$ & 38 \\ 
              & 19 & & 100 \\ \hline
             7 & 3 & $5.152 \cdot 10^{-42}$ & 62 \\ 
              & 7 & & 110 \\ \hline
             8 & 3 & $6.222 \cdot 10^{-64}$ & 97  \\ 
              & 5 &  & 141 \\ \hline
        \end{tabular}
    \end{table}

\bibliography{cubic_hypersurfaces}
\bibliographystyle{alpha}

\end{document}